\newtheorem{conjecture}{Conjecture}
\newtheorem{theorem}{Theorem}
\newtheorem{que}{Question}
\newtheorem{lemma}{Lemma}
\newtheorem{claim}{Claim}
\newcommand{\Q}{\mathbb{Q}}
\newcommand{\Z}{\mathbb{Z}}
\newcommand{\floor}[1]{\left\lfloor#1\right\rfloor} 
\newcommand{\ceil}[1]{\left\lceil#1\right\rceil} 
\title{Boundary slopes (nearly) bound exceptional slopes}
\author{Kazuhiro Ichihara}
\address{Department of Mathematics, College of Humanities and Sciences, Nihon University, 3-25-40 Sakurajosui, Setagaya-ku, Tokyo 156-8550, Japan.}
\email{ichihara.kazuhiro@nihon-u.ac.jp}
\thanks{Ichihara was supported by JSPS KAKENHI Grant Number 22K03301.}
\author{Thomas W.\ Mattman}
\address{Department of Mathematics and Statistics,
California State University, Chico,
Chico, CA 95929-0525}
\email{TMattman@CSUChico.edu}
\subjclass{Primary 57K32, Secondary 57K35, 57K10}
\keywords{exceptional surgery, boundary slope}
\date{\today}
\dedicatory{Dedicated to Professor Kimihiko Motegi on his 60th birthday.}
\begin{document}

\begin{abstract}
For a hyperbolic knot in $S^3$, Dehn surgery
along slope $r \in \Q \cup \{\frac10\}$ is {\em exceptional} if
it results in a non-hyperbolic manifold. We say meridional surgery, 
$r = \frac10$, is {\em trivial} as it recovers the manifold $S^3$.
We provide evidence in support of two conjectures. 
The first (inspired by a question of Professor Motegi) states that 
there are boundary slopes $b_1 < b_2$ such that all 
non-trivial exceptional surgeries occur, as rational numbers, in the interval $[b_1,b_2]$. 
We say a boundary slope is {\em NIT} if it is non-integral or toroidal. Second, when there are non-trivial exceptional surgeries, we conjecture there are NIT boundary slopes $b_1 \leq b_2$ so that the exceptional surgeries lie in 
$[\floor{b_1},\ceil{b_2}]$. Moreover, if $\ceil{b_1} \leq \floor{b_2}$, the integers in the interval $[ \ceil{b_1}, \floor{b_2} ]$ are all exceptional surgeries.
\end{abstract}

\maketitle

\section{Introduction}

Let $K$ be a hyperbolic knot in $S^3$. Using standard coordinates, we can identify Dehn surgery slopes with elements of $\Q \cup \{ \frac10 \}$,
where $\frac10$ corresponds to meridional surgery. We refer to meridional surgery as {\em trivial} and
say that $r \in \Q$ is a non-trivial {\em exceptional} surgery slope if Dehn surgery along $r$ results in a non-hyperbolic manifold. 

In this paper, we provide evidence in support of the following conjecture, originally stated as a question by Professor Motegi (see \cite{MattmanCyclic}).

\begin{conjecture}
\label{Conj1}%
For each hyperbolic knot in $S^3$, there exists a pair of boundary slopes $b_1, b_2$ with $b_1 < b_2$ such that all exceptional surgeries occur, as rational numbers, in the interval $[b_1,b_2]$. 
\end{conjecture}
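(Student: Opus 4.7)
The strategy is to combine Culler--Shalen theory with the classification of exceptional surgeries by type. By Hatcher's theorem, the set $B$ of boundary slopes of essential surfaces in the exterior $M = S^3 \setminus N(K)$ is finite; let $b_{\min} = \min B$ and $b_{\max} = \max B$. The goal then reduces to showing that every non-trivial exceptional slope $r \in \Q$ satisfies $b_{\min} \leq r \leq b_{\max}$ as rational numbers.

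First I would exploit the Culler--Shalen norm on $H_1(\partial M; \mathbb{R})$, whose unit ball is a polygon whose vertex directions correspond to strict boundary slopes. For cyclic surgeries, Culler--Shalen theory shows every non-trivial cyclic slope on a hyperbolic knot exterior is itself a boundary slope, hence automatically in $B$. For finite surgeries, Boyer--Zhang provide an analogous norm-based bound, placing such slopes at distance at most one from $B$. Toroidal surgeries, by Gordon--Luecke, are integer or half-integer and typically coincide with boundary slopes of essential tori. Reducible surgeries, constrained by the Cabling Conjecture (known in many classes), are cabling slopes and thus also boundary slopes. In each of these cases the exceptional slope sits at small bounded distance from $B$.

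The crux is handling small Seifert fibered surgeries, which need not be boundary slopes. Here one could invoke the Lackenby--Meyerhoff bound of at most ten exceptional slopes, combined with A-polynomial-based distance estimates, to control the location of such slopes relative to $B$. However, bounded distance to $B$ does not by itself imply containment in the \emph{convex} interval $[b_{\min}, b_{\max}]$: in principle an exceptional slope could exceed $b_{\max}$ by the allowed margin while still sitting close to some boundary slope.

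The main obstacle is precisely this gap between a distance-to-$B$ estimate and the interval statement of the conjecture. A full proof would require either a structural argument showing that the extremal boundary slopes genuinely flank all exceptional slopes, or a mechanism by which the presence of an exceptional slope outside $[b_{\min}, b_{\max}]$ forces a new boundary slope beyond it. Absent such a general result, the practical path --- and the one this paper appears to adopt --- is verification: compute $B$ explicitly via Hatcher--Oertel and related algorithms for rich families of knots (Montesinos, two-bridge, low-crossing hyperbolic), determine their exceptional slopes via classification theorems and SnapPy, and thereby accumulate evidence for Conjecture~\ref{Conj1} without as yet resolving it in general.
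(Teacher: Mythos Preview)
The statement is a \emph{conjecture}, not a theorem: the paper does not prove it in general, and neither do you. Your proposal is honest about this by the final paragraph, where you correctly identify that bounded-distance-to-$B$ estimates do not yield containment in $[b_{\min},b_{\max}]$ and that the paper's actual contribution is verification on explicit families. So in spirit you land in the right place. But several of the intermediate claims you use to build toward that conclusion are inaccurate and should be corrected.

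Most seriously, it is not true that Culler--Shalen theory shows every non-trivial cyclic slope is itself a boundary slope. What CGLS gives is a norm bound (cyclic slopes have minimal norm among non-boundary classes), and what \cite{MattmanCyclic} extracts from this is only that cyclic slopes lie in $(r_m-\tfrac12,\,r_M+\tfrac12)$; the $(-2,3,7)$-pretzel has cyclic slopes $18$ and $19$ that are not boundary slopes. Similarly, your treatment of reducible surgeries leans on the Cabling Conjecture, which remains open, and your remark that toroidal slopes ``typically'' coincide with boundary slopes understates the situation: a toroidal slope \emph{is} a boundary slope, since an essential torus in the filled manifold must meet the surgery core and so yields an essential punctured torus in the exterior.

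What the paper actually does is prove Conjecture~\ref{Conj1} unconditionally for alternating knots, Montesinos knots, certain torti-rational knots, and census knots through seven tetrahedra (Theorems~\ref{Thm7tet}--\ref{ThmTorti}), in each case by combining a complete classification of exceptional slopes for that family with explicit boundary-slope computations. Separately, Theorem~\ref{ThmFin} proves a weak general result in the direction of the conjecture: finite slopes lie in $[r_m-\tfrac52,\,r_M+\tfrac52]$. Your sketch gestures at this norm-based approach but does not carry it out; if you want to contribute beyond commentary, the concrete partial results of this type are where the actual arguments live.
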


Here a \textit{boundary slope} is a slope represented by the boundary curves of an essential embedded surface in the exterior of a knot. 

As suggested in the earlier paper~\cite{MattmanCyclic}, it seems likely that more can be said. 
For the figure-eight knot, the $(-2,3,7)$-pretzel knot, the twist knots, and the  $(-3,3,n)$-pretzel knots, 
the exceptional surgeries occur as a sequence of rational numbers bounded above and below by boundary slopes. In fact, except for the half-integer
toroidal boundary slope of the $(-2,3,7)$-knot,
in these examples the non-trivial exceptional slopes are a sequence of consecutive integers bounded between two boundary slopes.

Others have made similar observations. Notably, Teragaito~\cite{Teragaito} conjectured that integral exceptional surgeries occur as a 
sequence of consecutive integers. Dunfield~\cite{DunfieldInvent} showed that, for small knots, if surgery along slope $r$ yields a manifold with cyclic fundamental group, then 
there is a non-integral boundary slope in the interval $(r-1,r+1)$. This was generalized in \cite{IMS} where it's shown that surgeries that result in 
a manifold with finite fundamental group or a Seifert fibered space are also near boundary slopes.

Recently Dunfield~\cite{DunfieldCensus} made a complete enumeration of all exceptional fillings of $1$-cusped manifolds with an ideal triangulation
of nine or fewer tetrahedra. This includes data for 1267 complements of hyperbolic knots in $S^3$.  Making use of this data we propose the following 
refinement of the earlier conjectures. We will say that a boundary slope is {\em NIT} if it is non-integral or toroidal. 
We use $\floor{b}$ (resp., $\ceil{b}$) to denote the floor (resp., ceiling) of 
$b \in \Q$. If $b \in \Z$, $\floor{b} = \ceil{b} = b$. If not,
$b$ is between the consecutive integers $\floor{b}$ and $\ceil{b}$: 
$\floor{b} < b < \ceil{b}$.

\begin{conjecture}
\label{Conj6}%
Let $K$ be a hyperbolic knot in $S^3$ that admits non-trivial exceptional surgeries.
There are (possibly equal) NIT boundary slopes $b_1 \leq b_2$ such that all exceptional surgeries occur as rational numbers in the interval $[\floor{b_1},\ceil{b_2}]$ 
(or in the set $\{ \floor{b_1} \}$ if $\floor{b_1} = \ceil{b_2}$) 
and, if $\ceil{b_1} \leq \floor{b_2}$, the integers in the interval $[ \ceil{b_1}, \floor{b_2} ]$ (or the set $\{ \ceil{b_1} \}$ in case $\ceil{b_1} = \floor{b_2}$) are all exceptional surgeries.
\end{conjecture}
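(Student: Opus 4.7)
The plan is to verify Conjecture \ref{Conj6} computationally across those hyperbolic knots for which both exceptional surgery data and boundary slope data are sufficiently complete, then present this as supporting evidence rather than attempting a uniform proof. First I would treat the classically understood families---twist knots, the figure-eight knot, the $(-2,3,7)$-pretzel, and the $(-3,3,n)$-pretzel knots---where both the set of boundary slopes (via Hatcher--Oertel and related work) and the full list of exceptional slopes are already known; these serve as the motivating examples flagged in the introduction. I would then turn to Dunfield's census of $1267$ hyperbolic knot complements in $S^3$, combining his exhaustive enumeration of exceptional fillings with boundary slope information extracted via SnapPy, the A-polynomial, or Culler--Shalen character variety methods.

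For each knot I would carry out four steps: (i) tabulate all non-trivial exceptional surgery slopes in $\Q$; (ii) tabulate all known boundary slopes, flagging each as integral or non-integral and separately as toroidal or not so that the NIT subcollection can be identified; (iii) search among pairs of NIT boundary slopes $b_1 \leq b_2$ for a pair whose interval $[\floor{b_1},\ceil{b_2}]$ contains every exceptional slope; and (iv) whenever such a pair satisfies $\ceil{b_1} \leq \floor{b_2}$, check that every integer in $[\ceil{b_1},\floor{b_2}]$ is in fact an exceptional slope. The role of the NIT hypothesis is already visible in the $(-2,3,7)$-pretzel knot, whose half-integer toroidal slope $17/2$ is essential to bracketing the exceptional set; this suggests that restricting attention to integral non-toroidal boundary slopes would be too weak, and it also explains why both types of exception (non-integral, toroidal) are forced into the definition.

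The main obstacle I anticipate is that boundary slope information is far less complete than exceptional surgery information. Dunfield's census supplies exceptional fillings exhaustively, but the set of boundary slopes is known in full only for restricted families, and the available computational methods (A-polynomial Newton-polygon extraction, normal surface enumeration, branched surface techniques) are not guaranteed to return every boundary slope of a given knot. The verification is therefore intrinsically one-sided: the conjecture can be confirmed for a knot whenever the known NIT boundary slopes already suffice, but failure to locate bracketing NIT slopes may reflect missing boundary-slope data rather than a genuine counterexample. Carefully tracking which boundary-slope lists are provably complete, and isolating any census knot whose data resists the bracketing, will be the central technical difficulty and the place where genuine new computations---rather than cross-referencing of existing tables---are needed.
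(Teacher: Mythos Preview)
Your proposal is appropriate in spirit: the statement is a conjecture, the paper does not prove it in general, and the paper's own ``argument'' is precisely the kind of evidence-gathering you outline---verification for Dunfield's census knots using his exceptional-filling data together with boundary slopes from Culler's $A$-polynomial computations, SnapPy's Kabaya slopes, and the Hatcher--Thurston/Hatcher--Oertel algorithms. Your four-step procedure and your diagnosis of the main obstacle (incomplete boundary-slope data for the eight- and nine-tetrahedra knots) match the paper's Section~3 discussion closely.

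Two points of comparison are worth noting. First, the paper goes beyond finite verification by proving the conjecture outright for several \emph{infinite} families---all hyperbolic alternating knots, all hyperbolic Montesinos knots, and a large class of torti-rational knots---using the structural classification of exceptional surgeries on those families due to Ichihara--Masai, Wu, and Ichihara; your plan mentions classical families only as motivating examples and would benefit from incorporating these classification theorems to obtain genuine theorems rather than finite checks. Second, a small factual slip: the half-integer toroidal slope of the $(-2,3,7)$-pretzel is $37/2$, not $17/2$ (the exceptional set is $16,17,18,37/2,19,20$).
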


Note that, since $b_1 \leq b_2$, $\floor{b_1} = \ceil{b_2}$ implies $b_1 = b_2$ is an integer.
For the reader's convenience, here is an equivalent statement of the conjecture that may be easier to parse.

\begin{conjecture}
\label{Conj5}%
Let $K$ be a hyperbolic knot in $S^3$ that admits non-trivial exceptional surgeries. One of the following occurs.
\begin{enumerate}
\item
There are (possibly equal) integral toroidal boundary slopes $b_1 \leq b_2$. All exceptional slopes are rational numbers in the interval $[b_1, b_2]$ (or the set $\{b_1\}$ if $b_1 = b_2$) and every integer in that interval (resp., set) is exceptional.
\item
There is a non-integral boundary slope $b_1$ and an integral toroidal boundary slope $b_2$.
If $b_1 < b_2$, (resp. $b_2 < b_1$), every exceptional slope is in the interval $[ \floor{b_1}, b_2 ]$ (resp. $[ b_2, \ceil{b_1} ]$) and every integer in the interval $[ \ceil{b_1}, b_2 ]$ (resp. $[ b_2, \floor{b_1} ]$) is exceptional.
\item
There are (possibly equal) non-integral boundary slopes $b_1 \leq b_2$ and the exceptional slopes are in the interval $[ \floor{b_1}, \ceil{b_2} ]$. If $\ceil{b_1} = \floor{b_2}$, then that integer is an exceptional surgery. If $\ceil{b_1} < \floor{b_2}$, then every integer in $[\ceil{b_1}, \floor{b_2}]$ is exceptional.
\end{enumerate}
\end{conjecture}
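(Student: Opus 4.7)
The plan is to derive Conjecture~\ref{Conj5} from Conjecture~\ref{Conj6} (and conversely) by a case analysis on the nature of the NIT pair $(b_1, b_2)$. By the definition of NIT, each boundary slope is either non-integral or an integral toroidal slope, and these alternatives are mutually exclusive, so the unordered pair falls into exactly one of three types: both integral toroidal, exactly one integral toroidal and the other non-integral, or both non-integral. These will correspond to cases (1), (2), and (3) of Conjecture~\ref{Conj5}, respectively.

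For each type, I would translate the floor/ceiling formulas of Conjecture~\ref{Conj6} into the form appearing in Conjecture~\ref{Conj5}. When $b_i$ is integral toroidal, $\floor{b_i} = \ceil{b_i} = b_i$; when $b_i$ is non-integral, $\floor{b_i}$ and $\ceil{b_i}$ are consecutive integers with $\floor{b_i} < b_i < \ceil{b_i}$. Substituting into the intervals $[\floor{b_1}, \ceil{b_2}]$ and $[\ceil{b_1}, \floor{b_2}]$ of Conjecture~\ref{Conj6} directly yields the intervals stated in each of the three cases. The converse direction is essentially identical: each case of Conjecture~\ref{Conj5} furnishes a pair of NIT boundary slopes satisfying, after reordering so $b_1 \leq b_2$, the hypothesis of Conjecture~\ref{Conj6}, and the same substitution recovers the floor/ceiling form.

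The main bookkeeping subtlety appears in the mixed case: Conjecture~\ref{Conj6} always orders $b_1 \leq b_2$, while Conjecture~\ref{Conj5}(2) labels the non-integral slope as $b_1$ regardless of order, so passage between the two formulations requires a possible relabeling and splits into the $b_1 < b_2$ and $b_2 < b_1$ sub-cases. The degenerate possibilities $b_1 = b_2$ and $\ceil{b_1} = \floor{b_2}$ must also be checked separately; here I would invoke the remark following Conjecture~\ref{Conj6} that $\floor{b_1} = \ceil{b_2}$ together with $b_1 \leq b_2$ forces $b_1 = b_2 \in \Z$ in order to confirm the two formulations agree at these edge values. The hard part is not mathematical but organizational: keeping the three cases and their sub-cases straight while ensuring every boundary situation is accounted for.
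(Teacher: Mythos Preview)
Your proposal is correct and is exactly the natural verification of the equivalence; the paper itself offers no proof, merely asserting that Conjecture~\ref{Conj5} is ``an equivalent statement'' of Conjecture~\ref{Conj6} for the reader's convenience. Your case split on whether each NIT slope is integral (hence toroidal) or non-integral, together with the relabeling in the mixed case and the handling of the degenerate equalities, fills in precisely what the paper leaves implicit.
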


In Section~\ref{SecCensus} below, we present examples showing that every case of 
Conjecture~\ref{Conj5} arises. However, as explained further in that section, we
do have a question about part (3) of the Conjecture.

\begin{que}
\label{queConj53}%
Is there a knot in $S^3$ which satisfies Conjecture~\ref{Conj5} (3), but {\em only}
with the choice $b_1 < b_2$?
\end{que}

Not only does Dunfield's data make these conjectures plausible, 
we can prove them for alternating knots, Montesinos knots, and some torti-rational knots. 
(See the next section for the definition of torti-rational knots.)

\begin{theorem}\label{Thm7tet}
Conjectures~\ref{Conj1} and \ref{Conj6} hold for knots that admit a triangulation with at most seven tetrahedra.
\end{theorem}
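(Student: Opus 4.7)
The plan is to prove the theorem by a finite, computer-assisted case check based on Dunfield's census \cite{DunfieldCensus}. That census already gives a complete list of $1$-cusped hyperbolic $3$-manifolds with an ideal triangulation of at most nine tetrahedra together with all of their exceptional fillings, and it flags which of these manifolds are complements of knots in $S^3$. I would begin by extracting from this data the sub-list of hyperbolic knot exteriors whose minimal triangulation uses at most seven tetrahedra; this is a finite, explicit list $\{K_1,\dots,K_N\}$.

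For each knot $K_i$ in the list, the verification has two inputs. The exceptional surgery slopes of $K_i$, together with the topological type of each filling (lens, finite, Seifert fibered, toroidal, reducible), can be read off directly from the census. The second input is the set of boundary slopes of $K_i$. For many of the $K_i$ this set is already recorded in the literature: two-bridge knots are handled by Hatcher--Thurston and Floyd--Hatcher, Montesinos knots by Hatcher--Oertel, and further small alternating and torti-rational knots are covered by the results cited later in the paper. For any remaining cases one can compute the boundary slopes directly from the triangulation using standard normal-surface enumeration (for instance with \texttt{Regina}), so in principle every knot in the list can be resolved.

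Once both data sets are in hand for each $K_i$, Conjecture~\ref{Conj1} reduces to checking that the minimum and maximum non-trivial exceptional slopes each lie between two boundary slopes, and Conjecture~\ref{Conj6} reduces to producing NIT boundary slopes $b_1\leq b_2$ with the exceptional slopes contained in $[\floor{b_1},\ceil{b_2}]$ and every integer in $[\ceil{b_1},\floor{b_2}]$ exceptional. Both checks are straightforward arithmetic once the slopes are tabulated, and can be scripted.

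The main obstacle is not the bookkeeping but certifying that the boundary-slope list for each $K_i$ is \emph{complete}. Exceptional slopes are fully enumerated by the census, but missing even a single non-integral or toroidal boundary slope could invalidate the NIT refinement in Conjecture~\ref{Conj6}. For knots covered by the classical classifications this is not an issue, so the delicate step will be any $K_i$ in the seven-tetrahedron range that falls outside those classes: for those I would need to either identify the knot with a known family or run a normal-surface computation and argue, via Euler characteristic and genus bounds coming from the small triangulation, that no essential surface has been overlooked.
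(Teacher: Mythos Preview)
Your overall strategy---a finite case check against Dunfield's census followed by exhibiting suitable boundary slopes---is exactly what the paper does. Two points are worth correcting or sharpening.

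First, your ``main obstacle'' is not an obstacle. Both conjectures are \emph{existence} statements: one only needs to produce boundary slopes $b_1,b_2$ (respectively NIT slopes) with the stated properties, not to certify that the list of boundary slopes is complete. Missing boundary slopes can never falsify a verification already in hand; at worst they could leave a knot unverified. The paper is explicit about this: it notes that the $A$-polynomial is not guaranteed to detect every boundary slope, yet the slopes it does detect suffice for all 201 knots in the seven-tetrahedra range.

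Second, the paper's actual source of boundary slopes for the knots outside the two-bridge and Montesinos families is not normal-surface enumeration but Culler's precomputed $A$-polynomials~\cite{CullerApoly}, which are available precisely through seven tetrahedra. This is the real reason the theorem stops at seven: for eight and nine tetrahedra the $A$-polynomial data is not available, and the paper reports that many of those knots remain unverified. Your proposed fallback via \texttt{Regina} could in principle work, but it is a different computation from what the paper carries out, and the accompanying argument you sketch (Euler characteristic and genus bounds to rule out overlooked essential surfaces) is unnecessary once you drop the spurious completeness requirement.
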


\begin{theorem}\label{ThmAlt}
For a hyperbolic alternating knot with non-trivial exceptional slopes, one of the following holds. 
(i) There is a toroidal boundary slope which is the only exceptional slope. 
(ii) There are a pair of integral toroidal boundary slopes, and the exceptional slopes for $K$ are the integers contained in the interval bounded by them. 
Thus Conjectures~\ref{Conj1} and \ref{Conj6} hold for hyperbolic alternating knots.
\end{theorem}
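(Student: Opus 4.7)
The plan is to combine a structural classification of exceptional surgeries on hyperbolic alternating knots with the general fact that every toroidal slope is a boundary slope. The classification, assembled from work of Menasco, Ichihara, and collaborators, should establish three properties: (a) every exceptional slope on a hyperbolic alternating knot is an integer; (b) when there are multiple exceptional slopes, they form a set of consecutive integers; and (c) the extreme exceptional slope(s) produce toroidal fillings.

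The second ingredient is standard. If $r$ is a toroidal slope, choose an essential torus $T \subset K(r)$ and isotope it to meet the surgery solid torus $V \subset K(r)$ minimally, so that $T \cap V$ is a union of meridian disks of $V$. Then $F = T \cap E(K)$ is an essential planar surface in the knot exterior whose boundary curves all have slope $r$ on $\partial E(K)$, and hence $r$ is a boundary slope.

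Combining these, if there is exactly one exceptional slope, it must be toroidal by (c) and hence a toroidal boundary slope, giving case (i). If there are multiple exceptional slopes, then by (a) and (b) they form a consecutive set of integers, and by (c) both endpoints of this set produce toroidal fillings, hence are integral toroidal boundary slopes, giving case (ii). Since a toroidal slope (integer or not) is NIT, Conjectures~\ref{Conj1} and~\ref{Conj6} follow.

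The main obstacle I anticipate is the classification step itself, particularly properties (b) and (c). While integrality (a) is well established for alternating knots, and Teragaito has conjectured (b) in broader generality, verifying that this conjecture holds for hyperbolic alternating knots, together with the toroidal-extremes property (c), likely requires careful assembly of several published results, and possibly additional case analysis for families of alternating knots that admit small Seifert fibered exceptional surgeries (such as twist knots, or two-bridge knots more generally), where one must rule out a small Seifert fibered slope lying outside the interval spanned by the toroidal slopes.
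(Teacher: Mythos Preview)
Your plan is correct and matches the paper's approach: the paper executes exactly this strategy by invoking a single comprehensive classification result, \cite[Corollary~1.2]{IchiharaMasai2016}, which lists all hyperbolic alternating knots admitting non-trivial exceptional surgeries (the figure-eight, the twist knots $K_{[2n,\pm 2]}$, two-bridge knots $K_{[b_1,b_2]}$ with $|b_i|>2$, and pretzel knots $P(q_1,q_2,q_3)$ with $q_j\neq 0,\pm 1$) together with their exceptional slopes and types, so your properties (a)--(c) are verified by direct inspection of four explicit families rather than assembled piecemeal. Your anticipated obstacle thus dissolves into a single citation.
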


\begin{theorem}
\label{ThmMont}
Conjectures~\ref{Conj1} and \ref{Conj6} hold for hyperbolic Montesinos knots. 
Moreover, for a hyperbolic Montesinos knot with non-trivial exceptional slopes, one of (1) or (2) stated in Conjecture \ref{Conj5} holds. 
\end{theorem}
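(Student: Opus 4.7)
The strategy is to combine two ingredients: a classification of non-trivial exceptional surgeries on hyperbolic Montesinos knots (due primarily to Wu, with refinements by subsequent authors) and the Hatcher--Oertel enumeration of essential surfaces, which provides the complete list of boundary slopes.

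First I would reduce to length three. A length-two Montesinos knot is two-bridge, hence alternating, and is already handled by Theorem~\ref{ThmAlt}. Combining Wu's theorem on small Seifert fibered surgeries with known results on toroidal, reducible, and cyclic/finite surgeries, a hyperbolic Montesinos knot of length at least four admits no non-trivial exceptional surgery, so both conjectures hold vacuously. It remains to consider $K = M(\beta_1/\alpha_1, \beta_2/\alpha_2, \beta_3/\alpha_3)$ with all $\alpha_i \geq 2$ and at least one non-trivial exceptional surgery.

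For such $K$ the plan is to use the following fact, extracted from Wu's classification and work of Ichihara--Jong, Ichihara--Masai, and others: every non-trivial exceptional slope is integral, with the single exception of a half-integral toroidal slope that arises for a short list of sporadic knots such as $(-2,3,7)$. Moreover, $K$ admits a toroidal essential surface whose boundary slope is an \emph{integer}, so $K$ has an integral toroidal (hence NIT) boundary slope $t$. One of $b_1, b_2$ will be this $t$, which immediately places us in case~(1) or~(2) of Conjecture~\ref{Conj5}, never~(3). The second bounding slope is read off the Hatcher--Oertel list: either a second integral toroidal slope on the other side of the exceptional cluster, or (when no such slope exists) the nearest non-integral boundary slope.

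The main obstacle is the case-by-case nature of the verification. Wu's classification produces a finite list of families---most notably pretzel knots of type $P(-2,3,n)$, $P(-2,p,q)$, and $P(-3,3,n)$---along with a handful of sporadic Montesinos knots, and each family must be checked individually. For the infinite families the Hatcher--Oertel boundary slopes can be written in closed form and compared directly against the lists of exceptional slopes. The most delicate step is the \emph{density} condition: that every integer between the bounding NIT boundary slopes is actually an exceptional slope. This cannot be deduced from general principles and requires comparing Wu's explicit enumeration with the Hatcher--Oertel output; for the sporadic cases one may appeal to Dunfield's census (used in Theorem~\ref{Thm7tet}) or to a direct \textsc{SnapPy} computation.
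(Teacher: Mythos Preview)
Your proposal is correct and follows essentially the same route as the paper: reduce to non-alternating length-three Montesinos knots via Wu and Theorem~\ref{ThmAlt}, then invoke the complete classification of exceptional surgeries on such knots (the paper cites \cite[Appendix B]{IchiharaMasai2016} together with \cite{Wu2011} for the toroidal-only case) and compare against Hatcher--Oertel boundary slopes computed via \cite{DunfieldProgram}, case by case.

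Two small points where your sketch drifts from the paper's execution. First, the classification the paper actually uses is sharper than ``families $P(-2,3,n)$, $P(-2,p,q)$, $P(-3,3,n)$'': apart from the two infinite families $P(-2,3,2n+1)$ with $n>3$ or $n<-1$, the remaining knots with a Seifert slope form a \emph{finite} explicit list (about a dozen knots), and the knots with only toroidal slopes and more than one such slope are exactly five by \cite{Wu2011}. There is no genuine $P(-2,p,q)$ family to handle. Second, your assertion that every such $K$ carries an integral toroidal boundary slope is not a standalone lemma you can cite in advance; in the paper it is simply observed knot-by-knot as the list is traversed. So the ``density'' verification and the existence of the toroidal bounding slope are both outputs of the same finite check, not separate steps.
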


\begin{theorem}\label{ThmTorti}
Let $K$ be a hyperbolic torti-rational knot $K(\beta/\alpha;n)$ with an irreducible fraction $\beta/\alpha$ and $n \ge 4$. 
Suppose that $K$ admits a non-trivial exceptional slope. 
Then one of the following holds. 
(i) There is a toroidal boundary slope for $K$ which is the only non-trivial exceptional slope. 
(ii) There are a pair of integral toroidal boundary slopes, and the
non-trivial exceptional slopes for $K$ are the integers contained in the interval bounded by them. 
Thus Conjectures~\ref{Conj1} and \ref{Conj6} hold for the torti-rational knots. 
\end{theorem}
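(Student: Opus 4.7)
The plan is to leverage existing classification results for exceptional surgeries on torti-rational knots together with a parallel classification of their boundary slopes. The condition $n \ge 4$ is crucial: for sufficient twisting, the Seifert-fibered and small Seifert filled manifolds get pushed far enough away from the untwisted knot that one can control their slopes precisely. I would first recall the construction of $K(\beta/\alpha;n)$ as $n$-twisting a rational (2-bridge) knot $K(\beta/\alpha)$ along a twisting disk bounded by an unknotted companion, and collect from the literature (work of Ichihara--Motegi and related papers on torti-rational and twist-family surgeries) the complete list of non-trivial exceptional slopes for such knots when $n \ge 4$. Typically, this list consists of integral toroidal slopes coming from essential Conway spheres/tori persisting under twisting, together with a possible isolated toroidal slope when the underlying rational knot is special.

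Next I would produce the corresponding NIT boundary slopes. Here the key tool is the Hatcher--Oertel algorithm (or its extensions to torti-rational knots), which gives boundary slopes of $K(\beta/\alpha;n)$ as an explicit arithmetic function of $n$ and the continued fraction expansion of $\beta/\alpha$. The twisting operation translates boundary slopes by a predictable additive term, so for $n \ge 4$ one obtains integral toroidal slopes in the expected positions. I would verify that each toroidal exceptional slope identified in the previous step is actually realized by an essential torus, hence is a genuine boundary slope, and that these slopes are integral.

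With both lists in hand, the matching step is straightforward casework aligned with the dichotomy in the statement. If the knot has exactly one exceptional slope, it must be one of the toroidal slopes above, giving case (i). Otherwise, the enumeration shows that the exceptional slopes are a consecutive run of integers sandwiched between two integral toroidal boundary slopes, yielding case (ii); this fits into Conjecture~\ref{Conj5}(1) and therefore confirms Conjectures~\ref{Conj1} and \ref{Conj6}. I would also briefly check the endpoint cases (small $\alpha$, $\beta/\alpha$ equivalent to $1/q$ giving twist knots, etc.) to ensure nothing degenerates, noting that these overlap with Theorem~\ref{Thm7tet} or with well-understood families.

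The main obstacle is not any single deep argument but rather assembling a complete and uniform description that simultaneously covers exceptional slopes and boundary slopes across all $(\beta/\alpha,n)$ with $n\ge 4$. The delicate part is ruling out sporadic exceptional surgeries lying outside the toroidal window, since those would need to come from small Seifert fibered fillings; here I expect to invoke Dean-type/Seifert-slope arguments together with the $n\ge 4$ hypothesis to bound where such fillings can occur, and then check that in every surviving case the resulting slope still sits inside the integral interval determined by the two toroidal boundary slopes. Handling knots for which one of the would-be toroidal slopes fails to be a boundary slope (e.g.\ when the essential torus is only immersed, or when the twist region interacts with a Conway sphere) would be the most technical point, and I would treat those as explicit exceptions absorbed into case (i).
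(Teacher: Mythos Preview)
Your proposal misses the paper's central reduction and leans on tools that do not exist in the form you need. Two concrete gaps:

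First, you describe $K(\beta/\alpha;n)$ as arising from twisting a two-bridge \emph{knot}; in fact $L_{\beta/\alpha}$ is a two-bridge \emph{link} $K_1 \cup K_2$, and $K(\beta/\alpha;n)$ is the image of $K_1$ after $(-1/n)$-surgery on $K_2$. The paper exploits this link structure essentially. Its Lemma~\ref{lemTR1} shows that for $n \ge 4$ there is a slope-shifting bijection $r \leftrightarrow r' = r - n\,l^2$ (with $l = \mathrm{lk}(K_1,K_2)$) between exceptional slopes of $K$ and exceptional slopes of the component $K_1$ of $L_{\beta/\alpha}$. The forward direction is exactly where $n \ge 4$ is used, via \cite{IchiharaJongMasai2019}: if $K(r) \cong L_{\beta/\alpha}(r',-1/n)$ is non-hyperbolic then already $L_{\beta/\alpha}(r',*)$ is non-hyperbolic. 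The backward direction checks that Seifert and toroidal fillings on $K_1$ remain Seifert and toroidal after the further $(-1/n)$-filling. Your ``Dean-type/Seifert-slope arguments'' gesture in this direction but do not identify the actual mechanism.

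Second, the classification you hope to quote --- a complete list of exceptional slopes for torti-rational knots, plus a Hatcher--Oertel-style boundary slope algorithm for them --- is not available. Torti-rational knots are not Montesinos in general, so Hatcher--Oertel does not apply, and the exceptional-slope classification is essentially what this theorem establishes. The paper instead proves the analogous classification one level down: Lemma~\ref{lemTR2} determines all exceptional slopes on a component of a hyperbolic two-bridge \emph{link}, using \cite{Ichihara2012} together with continued-fraction identities (Claim~\ref{Clm1} and Lemma~\ref{Lem1}) to pin down exactly when extra toroidal slopes can appear. The dichotomy (i)/(ii) is proved for the link component and then transported to $K$ via the shift $r' \mapsto r' + n l^2$. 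Without this reduction to two-bridge links, your direct approach has no engine to rule out the ``sporadic exceptional surgeries lying outside the toroidal window'' that you yourself flag as the delicate point.
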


We say that a Dehn surgery slope $r \in \Q$ is a {\em cyclic} (respectively {\em finite}) slope if the surgery results in a manifold having cyclic (resp.~finite) 
fundamental group. Teragaito~\cite{TeragaitoIsol} produced several infinite families of knots that have an exceptional 
integral surgery $m$ such that neither $m-1$ nor $m+1$ is 
exceptional. For such a knot, Conjecture~\ref{Conj6} would imply that $m$ is the only integral exceptional surgery and either $m$ is a toroidal slope, or else
there are (possibly equal) non-integral boundary slopes $b_1 \leq b_2$ such that $m \in [\lfloor b_1 \rfloor, \lceil b_2 \rceil ]$. In particular, we can prove Conjecture~\ref{Conj6} for knots with a single exceptional surgery, which is either toroidal or cyclic. 
In addition to the infinite families of Teragaito's paper~\cite{TeragaitoIsol}, there are many examples of knots whose
only non-trivial exceptional surgery is toroidal, including two-bridge knots
$K_{[b_1,b_2]}$ with $|b_1|,|b_2| > 2$ and
pretzel knots $P(q_1,q_2,q_3)$ with
$q_j \neq 0, \pm 1$ for $j = 1,2,3$. 
(See the proof of Theorem 2 in Section 2.)

\begin{theorem}\label{ThmIsol}
Let $K$ be a hyperbolic knot in $S^3$ that has a single non-trivial exceptional surgery, which is toroidal or cyclic. Then $K$ satisfies Conjecture~\ref{Conj6}
\end{theorem}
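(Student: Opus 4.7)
The plan is to perform a case analysis on whether the unique non-trivial exceptional slope $r$ is toroidal or cyclic, and in each case to exhibit a single NIT boundary slope $b$ so that the choice $b_1 = b_2 = b$ directly witnesses Conjecture~\ref{Conj6}.

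For the toroidal case, I would argue that $r$ itself is a NIT boundary slope. Since $K$ is hyperbolic, $E(K)$ is atoroidal, so the essential torus in the surgered manifold $S^3_r(K)$ must meet the core of the Dehn filling solid torus; puncturing produces an essential punctured torus in $E(K)$ with boundary slope $r$. Hence $r$ is a boundary slope, and being toroidal it is NIT. Taking $b_1 = b_2 = r$, the interval $[\floor{r},\ceil{r}]$ contains $r$, and the integer-inclusion clause of Conjecture~\ref{Conj6} is either vacuous (when $r$ is non-integral, since then $\ceil{r} > \floor{r}$) or reduces to the trivial observation that $r$ itself is exceptional (when $r$ is integral).

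For the cyclic case, the Cyclic Surgery Theorem of Culler--Gordon--Luecke--Shalen forces $r = m$ to be an integer. I would invoke Dunfield's theorem~\cite{DunfieldInvent} (as extended in \cite{IMS}) to produce a non-integral boundary slope $b$ with $|m - b| < 1$. Such $b$ is automatically NIT, and $b \in (m-1,m+1) \setminus \Z$ gives $m \in \{\floor{b},\ceil{b}\}$, so $b_1 = b_2 = b$ satisfies Conjecture~\ref{Conj6}; the second clause is vacuous because $\ceil{b} > \floor{b}$. Note that any integer boundary slope at distance at most $1$ from $m$ would lie in $\{m-1,m,m+1\}$ and, in order to be NIT, would have to be toroidal; by the single-exceptional hypothesis only $m$ itself could possibly be toroidal, but $m$ is cyclic (yielding a lens space, which is atoroidal), so no integral NIT neighbour of $m$ can exist. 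This forces us to find a non-integral boundary slope, which is exactly what the Dunfield/IMS input supplies.

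The main obstacle I foresee is securing the hypothesis of Dunfield's theorem, namely smallness of $K$. I would expect this to follow from the strong restrictions imposed by having only a single exceptional surgery --- for instance, via the observation that a closed essential surface in $E(K)$ gives the Culler--Shalen seminorm a non-trivial kernel, which via the CGLS machinery tends to force additional exceptional slopes along that kernel line --- or else from independent structural results on hyperbolic knots admitting non-trivial cyclic surgeries, consistent with the Berge conjecture. Should smallness fail, one would need to adapt the ideal-point argument to accommodate a closed essential surface in $E(K)$ alongside the essential surface arising from the ideal point of the cyclic representation.
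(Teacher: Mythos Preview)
Your approach mirrors the paper's proof exactly: in the toroidal case set $b_1=b_2=m$, and in the cyclic case invoke Dunfield's theorem to produce a non-integral boundary slope $b\in(m-1,m+1)$ and set $b_1=b_2=b$. Your added justifications---why a toroidal slope is a boundary slope, why a non-trivial cyclic slope on a hyperbolic knot is integral via CGLS, and why no integral NIT slope can sit next to $m$---are correct and more explicit than the paper, which simply cites \cite{DunfieldInvent} and moves on.

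You are right to flag the smallness hypothesis in Dunfield's theorem; the paper's own proof invokes \cite{DunfieldInvent} without addressing it either. However, your proposed workarounds do not close the gap. A closed essential surface in $E(K)$ does not, in general, give the Culler--Shalen norm a nontrivial kernel (the norm is determined by ideal points of curves in the character variety, and closed surfaces arise from ideal points whose associated trees have no edge stabilized by a peripheral element; they do not contribute a linear form to the norm), nor is there a theorem producing extra exceptional slopes from such a surface. Appealing to the Berge conjecture is likewise not a proof. So this hypothesis remains a genuine loose end, present equally in your argument and in the paper's; absent an independent reason that a hyperbolic knot with a single non-trivial cyclic exceptional surgery must be small, the cyclic case as written is incomplete.
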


\begin{proof}
If $K$ has the toroidal slope $m$ as its unique non-trivial exceptional surgery, let $b_1 = b_2 = m$.
Suppose the cyclic slope $m$ is the only non-trivial exceptional surgery. Dunfield~\cite{DunfieldInvent} showed that there is a boundary slope
$b \in (m-1,m+1)$. Let $b_1 = b_2 = b$.
\end{proof}

The figure-eight knot is well-known as the hyperbolic knot in $S^3$ that admits the largest number of exceptional surgeries,
including every integer in the interval $[-4,4]$. 
Conjecture~\ref{Conj6} shows how this is related to the figure-eight knot being amphicheiral. 

\begin{theorem}
If $K$ is an amphicheiral knot for which Conjecture~\ref{Conj6} holds and $m$ is an integral exceptional surgery slope, then $|m| \leq 4$.
\end{theorem}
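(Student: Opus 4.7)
The plan is to combine the symmetry of exceptional and boundary slopes coming from amphicheirality with Conjecture~\ref{Conj6} and a standard quantitative bound on the distance between exceptional slopes.

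First, I would observe that amphicheirality of $K$ provides an orientation-reversing self-homeomorphism of $S^3$ sending $K$ to itself and each surgery slope $r$ to $-r$. Consequently the set $E$ of non-trivial exceptional slopes and the set of boundary slopes of $K$ are both closed under negation, and the NIT condition (non-integral or toroidal) is preserved under negation, since both ``non-integral'' and ``toroidal'' are invariants of the unordered slope. In particular, if $m$ is an integer exceptional slope then so is $-m$.

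Next, by Conjecture~\ref{Conj6} applied to $K$, choose NIT boundary slopes $b_1 \le b_2$ with $E \subseteq [\floor{b_1}, \ceil{b_2}]$. Using $E = -E$, we also have $E \subseteq [-\ceil{b_2}, -\floor{b_1}]$, so $E$ lies in the symmetric interval $[-N, N]$ about the origin, where $N = \min(\ceil{b_2}, -\floor{b_1})$. Hence $|m| \le N$.

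Finally, I would bound $N$. The cleanest route is to invoke the Lackenby--Meyerhoff \emph{$8$-theorem}: for any hyperbolic knot in $S^3$, any two exceptional slopes $r, s$ satisfy $\Delta(r,s) \le 8$. Applied to the integer slopes $m$ and $-m$, this yields $2|m| \le 8$, giving $|m| \le 4$.

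The main obstacle is philosophical more than technical: the sketch above does not really lean on Conjecture~\ref{Conj6}, since the $8$-theorem alone already forces $|m| \le 4$ for any amphicheiral hyperbolic knot. To produce a proof whose essential content is Conjecture~\ref{Conj6}, one would want to argue that for amphicheiral $K$ the NIT pair can be chosen with $b_1 = -b_2$ and that $|b_2| \le 4$ follows from part (b) of Conjecture~\ref{Conj6} (which forces a run $[\ceil{b_1},\floor{b_2}]$ of consecutive integers to all be exceptional) together with a separate bound on how many consecutive integer exceptional surgeries a hyperbolic knot can admit. Replacing the $8$-theorem by such an intrinsic argument appears to be the genuine hurdle.
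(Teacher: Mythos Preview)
Your argument is correct, but as you yourself diagnose, it bypasses Conjecture~\ref{Conj6} entirely: amphicheirality gives $-m$ exceptional, and the Lackenby--Meyerhoff distance bound $\Delta(m,-m)=2|m|\le 8$ finishes. Your second paragraph, establishing $|m|\le N$, plays no role.

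The paper's proof is precisely the one you sketch in your final paragraph and call ``the genuine hurdle,'' and it is not a hurdle at all. Since $m$ and $-m$ are both exceptional, Conjecture~\ref{Conj6} forces $\floor{b_1}\le -m$ and $m\le \ceil{b_2}$; the second clause of the conjecture then makes every integer in $[-m,m]$ exceptional (the endpoints $\pm m$ being exceptional by hypothesis, the interior ones by $[\ceil{b_1},\floor{b_2}]\supseteq[-m+1,m-1]$). That gives $2m+1$ non-trivial exceptional slopes, and together with the meridian this contradicts the Lackenby--Meyerhoff bound of ten exceptional fillings once $m\ge 5$.

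So both routes invoke Lackenby--Meyerhoff, but different halves of that paper: you use the distance bound $\Delta\le 8$, while the authors use the cardinality bound $\le 10$. Your version is shorter and shows the theorem is unconditionally true (independent of Conjecture~\ref{Conj6}); the paper's version is the intended one because the theorem's purpose is to exhibit a consequence of the conjecture, and only that argument actually exercises its ``consecutive integers'' clause.
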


\begin{proof} 
Since $K$ is amphicheiral, we can assume $m \geq 0$.
The conjecture implies the $2m+1$ integers in $[-m, m]$ are all exceptional. If $m > 4$, $K$ 
would have more than ten exceptional
surgeries, contradicting the bound proved by
Lackenby and Meyerhoff~\cite{LackenbyMeyerhoff}.
\end{proof}

The two-bridge knots $K_{[2b,2b]}$ for $b>1$ are amphicheiral with $0$
as the unique non-trivial exceptional surgery, see the proof of Theorem 2 in Section 2.
As far as we know, $0$ is the only 
non-trivial exceptional surgery that occurs for an amphicheiral knot in $S^3$ that is not the figure-eight knot.

In \cite{MattmanCyclic} we showed
a weak form of Conjecture~\ref{Conj1} for cyclic surgery slopes and here we prove the analog for finite slopes. 
Boyer and Zhang~\cite{BoyerZhangFiniteFilling} showed that a finite surgery is integral or half-integral.

\begin{theorem}
\label{ThmFin}
If $t$ is a finite surgery on a hyperbolic knot $K$ in $S^3$ and $r_m$ and $r_M$ are 
the least and greatest finite boundary slopes of $K$, then $r_m - \frac52 \leq t \leq r_M + \frac52$.
If $t$ is half-integral, then  $r_m - 1 \leq t \leq r_M + 1$.
\end{theorem}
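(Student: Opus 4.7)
The plan is to adapt the Culler-Shalen seminorm argument from \cite{MattmanCyclic} (which treats the cyclic case) to the broader class of finite slopes, by substituting the equality $\|c\| = s$ for a cyclic slope $c$ with the Boyer-Zhang upper bound $\|t\| \leq \max\{2s, s + 8\}$ available for a finite slope $t$.

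First, I would set up the Culler-Shalen seminorm $\|\cdot\|$ on $H_1(\partial N(K); \mathbb{R})$, which decomposes as $\|\alpha\| = \sum_i a_i \Delta(\alpha, b_i)$ over the strict boundary slopes $b_i = p_i/q_i$ of $K$ (in lowest terms) with positive weights $a_i > 0$. Since $K \subset S^3$ is hyperbolic, the meridian $\mu$ minimizes $\|\cdot\|$ on nonzero classes (by the Cyclic Surgery Theorem), so $s := \|\mu\| = \sum_i a_i q_i$; and for $t = p/q$ in lowest terms, $\Delta(t, b_i) = q q_i |t - b_i|$, which gives $\|t\| = q \sum_i a_i q_i |t - b_i|$. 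Boyer-Zhang's analysis of finite Dehn surgery then yields $q \in \{1, 2\}$, $s \geq 2$, and $\|t\| \leq \max\{2s, s + 8\}$.

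Next, assuming $t > r_M$ (the case $t < r_m$ is symmetric), the condition $b_i \leq r_M < t$ for every strict boundary slope promotes the termwise bound $|t - b_i| = t - b_i \geq t - r_M$, after summation, to $\|t\| \geq qs(t - r_M)$. Combining with the Boyer-Zhang bound yields $t - r_M \leq \max\{2/q,\, 1/q + 8/(qs)\}$. In the half-integer case $q = 2$, this is at most $\max\{1,\, 1/2 + 4/s\} \leq 5/2$ using $s \geq 2$. In the integer cyclic case, the equality $\|t\| = s$ forces $t - r_M \leq 1$. In the remaining integer finite non-cyclic case, the sharper form $\|t\| \leq 2s$ of the Boyer-Zhang inequality (when available) gives $t - r_M \leq 2$.

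The main obstacle is precisely the $q = 1$, small-$s$ subcase of non-cyclic finite surgery: plugging only the weaker estimate $\|t\| \leq s + 8$ into the inequality yields $t - r_M \leq 1 + 8/s$, which exceeds $5/2$ whenever $s \leq 5$. I would close this subcase either by appealing to the tightest form of the Boyer-Zhang seminorm inequality for integer fillings, or, failing that, by a direct check against the highly restricted list of hyperbolic knots in $S^3$ known to admit an integer finite non-cyclic surgery. The resulting constant $5/2$ is driven by the half-integer case, where the estimate $s \geq 2$ is essentially sharp.
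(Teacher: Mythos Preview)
Your seminorm inequality $\|t\| \geq qs(t - r_M)$ is correct, but it is too coarse to reach the constant $\tfrac{5}{2}$, and you have already noticed this: in the integer case with $s$ small you are stuck at $t - r_M \leq 1 + 8/s$. Two remarks sharpen where the gap lies. First, for a hyperbolic knot in $S^3$ one has $s \geq 4$ (Boyer--Zhang \cite[Lemma~9.1]{BoyerZhangFiniteFilling}), not merely $s \geq 2$; this is what makes $\max(2s, s+8) \leq 3s$ hold uniformly. Second, even granting $s \geq 4$ and the clean bound $\|t\| \leq 3s$, your inequality yields only $t - r_M \leq 3$ for an integral finite slope, which is exactly the bound already recorded in \cite{IMS}. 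So the linear estimate alone does not improve on $3$, and neither of your proposed escape routes (a hypothetical sharper Boyer--Zhang bound for integral fillings, or an enumeration of knots with integral finite non-cyclic surgery) is actually available in the form you need.

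The missing half-unit comes from a convex-geometry argument that your computation does not see. The paper works with the fundamental polygon $P$ of the norm rather than with the scalar inequality: one places a triangle $T \subset P$ with base $[-s/m, s/m]$ on the $x$-axis and apex on the ray through $(r_M,1)$, and computes that the width of $T$ at height $y=1$ is $w(1) = 2(n - r_M) - \tfrac{2}{m}(\|n\| - s)$. The crucial point is that $w(1) > 1$ forces a nonzero lattice point in the interior of $P$, which is impossible since $P$ is the minimal-norm ball. With $m \geq s$ and $\|n\| \leq 3s$ one gets $\tfrac{2}{m}(\|n\|-s) \leq 4$, so $n - r_M > \tfrac{5}{2}$ gives the contradiction. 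The same triangle argument in the half-integer case yields $\tfrac{n}{2} - r_M \leq 1$, stronger than your $\tfrac{3}{2}$. In short, the step you are missing is the lattice-point exclusion in $P$, which buys exactly the extra $\tfrac{1}{2}$ over the bare inequality $\|n\| \geq s(n - r_M)$.
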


In Section 2, we prove Theorems~\ref{ThmAlt}, \ref{ThmMont}, and \ref{ThmTorti}.
In Section 3, we summarize our analysis of the 1267 knot complements in 
the census through nine tetrahedra and prove Theorem~\ref{Thm7tet}. 
We also give examples showing that each of the three parts of Conjecture~\ref{Conj5}
can occur and discuss Question~\ref{queConj53}.
In Section 4, we prove Theorem~\ref{ThmFin}.
Finally, the appendix includes tables of exceptional surgeries and relevant boundary slopes for the census knots.

\section{Proofs for Theorems~\ref{ThmAlt}, \ref{ThmMont}, and \ref{ThmTorti}}

In the following, $r_{(T)}$ (resp., $r_{(S)}$) indicates that the slope $r$ is a toroidal slope (resp., a Seifert slope). 

\begin{proof} (of Theorem~\ref{ThmAlt})
Let $K$ be a hyperbolic alternating knot in $S^3$ which admits 
non-trivial exceptional surgeries. 
Then, by \cite[Corollary 1.2.]{IchiharaMasai2016}, one of the following holds. 
\begin{itemize}
\item
$K$ is equivalent to the figure-eight knot, and the exceptional slopes are;
\[ -4_{(T)}, -3_{(S)}, -2_{(S)},-1_{(S)}, 0_{(T)}, 1_{(S)}, 2_{(S)}, 3_{(S)}, 4_{(T)}. \]
Thus (ii) in the statement of the theorem holds. 
\item 
$K$ is equivalent to a two bridge knot $K_{[2n, 2]}$ (resp. $K_{[2n, -2]}$) with $ | n | > 2$, and the exceptional slopes are;
\[ -4_{(T)}, -3_{(S)}, -2_{(S)},-1_{(S)}, 0_{(T)}.  \qquad (\text{resp. } 0_{(T)}, 1_{(S)}, 2_{(S)}, 3_{(S)}, 4_{(T)} .) \]
Thus (ii) in the statement of the theorem holds. 
\item
$K$ is equivalent to a two bridge knot $K_{[b_1,b_2]}$ with $ | b_1 | , | b_2 | > 2$ and both $b_1$ and $b_2$ even (resp. $b_1$ is odd and $b_2$ is even), and the exceptional slopes are $ 0_{(T)}$ (resp. $ 2 {b_2}_{(T)}$) only. 
Thus (i) in the statement of the theorem holds. 
\item
$K$ is equivalent to a pretzel knot $P(q_1,q_2,q_3)$ with $q_j \ne 0, \pm 1$ for $j = 1,2,3$ and $q_1,q_2,q_3$ are all odd (resp. $q_1$ is even and $q_2,q_3$ are odd), and the exceptional slopes are $ 0_{(T)}$ (resp. $2(q_2 + q_3)$) only. 
Thus (i) in the statement of the theorem holds. 
\end{itemize}
\end{proof}

In the following, $r_{(NI)}$ indicates that the slope $r$ is a non-integral boundary slope. 

\begin{proof} (of Theorem~\ref{ThmMont}).
Let $K$ be a hyperbolic Montesinos knot in $S^3$ admitting non-trivial exceptional surgeries. 
Then, by \cite[Theorem 3.6]{Wu1996}, the length of $K$ must be at most three. 
If $K$ is alternating, then Theorem~\ref{ThmAlt} assures the statement of the theorem holds for $K$. 
Thus, in the following, we assume that $K$ is non-alternating, which implies the length of $K$ is three, i.e., $K = M ( p_1/q_1, p_2/q_2, p_3/q_3 )$ with $q_j \ge 2$ for $j=1,2,3$. 
Also, since Montesinos knots have no reducible surgeries by \cite[Corollary 2.6]{Wu1996}, each exceptional surgery slope for $K$ is a toroidal slope or a Seifert slope. 
There are no toroidal Seifert slopes for Montesinos knots other than the trefoil \cite[Theorem 1.1]{IchiharaJong2010}. 

First, suppose that $K$ admits Seifert surgeries. 
Then, by \cite[Appendix B]{IchiharaMasai2016}, together with 
the program \cite{DunfieldProgram}, which can enumerate the boundary slopes for a given Montesinos knot based on the algorithm developed in \cite{HatcherOertel1989}, one of the following holds.

\begin{itemize}

\item
$K$ is equivalent to 
$P(-2, 3, 2n+1)$ with $n >3$, and the exceptional slopes and non-integral boundary slopes are;
\[ 
	4n+6_{(S)}, 4n+6+(1/(n-1))_{(NI)}, 4n+7_{(S)},	4n+8_{(T)}.
 \]
Thus (2) in Conjecture \ref{Conj5} holds. 
Further, there exists boundary slopes 0 and 16 for $K$, and so, Conjecture 1 also holds. 

\item
$K$ is equivalent to 
$P(-2, 3, 2n+1)$ with $n < -1$, and the exceptional slopes and non-integral boundary slopes are; 
\[ 
	4n+6+(2/(2n+1))_{(NI)}, 4n+6_{(S)}, 4n+7_{(S)},	4n+8_{(T)}.
 \]
Thus (2) in Conjecture \ref{Conj5} holds. 
As the exceptional surgeries are between
a NI boundary slope and a toroidal boundary slope, Conjecture 1 also holds. 

\item
$K$ is equivalent to 
$P(-2, 3, 7)$, and the exceptional slopes and non-integral boundary slopes are;
\[ 
	16_{(T)}, 17_{(S)}, 18_{(S)}, 18+1/2_{(T)}, 19_{(S)}, 20_{(T)}.
 \]
Thus (1) in Conjecture \ref{Conj5} holds. 

\item
$K$ is equivalent to 
$P(-3, 3, 3)$, and the exceptional slopes are;
\[ 
	0_{(T)}, 1_{(S)}, 2_{(T)}.
 \]
Thus (1) in Conjecture \ref{Conj5} holds. 

\item
$K$ is equivalent to 
$P(-3, 3, 4)$, and the exceptional slopes and non-integral boundary slopes are;
\[ 
	0_{(T)}, 1_{(S)}, 8/5_{(NI)}.
 \]
Thus (2) in Conjecture \ref{Conj5} holds. 

\item
$K$ is equivalent to 
$P(-3, 3, 5)$, and the exceptional slopes and non-integral boundary slopes are;
\[ 
	0_{(T)}, 1_{(S)}, 4/3_{(NI)}.
 \]
Thus (2) in Conjecture \ref{Conj5} holds. 

\item
$K$ is equivalent to 
$P(-3, 3, 6)$, and the exceptional slopes and non-integral boundary slopes are;
\[ 
	0_{(T)}, 1_{(S)}, 8/7_{(NI)}.
 \]
Thus (2) in Conjecture \ref{Conj5} holds. 

\item
$K$ is equivalent to 
$M(-1/2, 1/3, 2/5)$, and the exceptional slopes and non-integral boundary slopes are;
\[ 
	8/3_{(NI)}, 3_{(S)}, 4_{(S)}, 5_{(S)}, 6_{(T)}.
 \]
Thus (2) in Conjecture \ref{Conj5} holds. 

\item
$K$ is equivalent to 
$M(-1/2, 1/3, 2/7)$, and the exceptional slopes and non-integral boundary slopes are;
\[ 
	-2_{(T)}, -1_{(S)}, 0_{(S)}, 1_{(S)}, 3/2_{(NI)}.
 \]
Thus (2) in Conjecture \ref{Conj5} holds. 

\item
$K$ is equivalent to 
$M(-1/2,1/3,2/9)$, and the exceptional slopes and non-integral boundary slopes are;
\[ 
	3/2_{(NI)}, 2_{(S)}, 3_{(S)}, 4_{(S)}, 5_{(T)}.
 \]
Thus (2) in Conjecture \ref{Conj5} holds. 

\item
$K$ is equivalent to 
$M(-1/2, 1/3, 2/11)$, and the exceptional slopes and non-integral boundary slopes are;
\[ 
	-3_{(T)}, -2_{(S)}, -1_{(S)}, 0_{(T)}. 
 \]
Thus (1) in Conjecture \ref{Conj5} holds. 

\item
$K$ is equivalent to 
$M(-1/2, 1/5, 2/5)$, and the exceptional slopes and non-integral boundary slopes are;
\[ 
	32/5_{(NI)}, 7_{(S)}, 8_{(S)}, 9_{(T)}.
 \]
Thus (2) in Conjecture \ref{Conj5} holds. 

\item
$K$ is equivalent to 
$M(-1/2, 1/7, 2/5)$, and the exceptional slopes and non-integral boundary slopes are;
\[ 
	72/7_{(NI)}, 11_{(S)}, 12_{(T)}.
 \]
Thus (2) in Conjecture \ref{Conj5} holds. 

\item
$K$ is equivalent to 
$M(-2/3, 1/3, 2/5)$, and the exceptional slopes and non-integral boundary slopes are;
\[ 
	-6_{(T)}, -5_{(S)}, -4_{(T)}.
 \]
Thus (1) in Conjecture \ref{Conj5} holds. 

\end{itemize}

Next, we consider the case that $K$ has only toroidal slopes. 
If there is a single toroidal slope, then (1) in Conjecture \ref{Conj5} holds. 
The remaining cases are that $K$ has at least two toroidal slopes. 
Then, by \cite[(3) in page 308]{Wu2011}, $K$ admits exactly two toroidal surgeries, and $K$ is equivalent to one of five knots. 
Three of the five already appear in the above list. 
The remaining are the following. 

\begin{itemize}

\item
$K$ is equivalent to 
$P(-3,3,7)$
and the exceptional (i.e., toroidal) slopes are;
\[ 
	0_{(T)}, 1_{(T)}. 
 \]
Thus (1) in Conjecture \ref{Conj5} holds. 

\item
$K$ is equivalent to 
$M(-2/3, 1/3, 1/4)$, and the exceptional (i.e., toroidal) slopes are;
\[ 
	12_{(T)}, 13_{(T)}. 
 \]
Thus (1) in Conjecture \ref{Conj5} holds. 

\end{itemize}

This completes the proof. 

\end{proof}

Let $L_{\beta/\alpha} = K_1 \cup  K_2$ be a two-bridge link in $S^3$ associated to an irreducible fraction $\beta/\alpha$. 
After performing Dehn surgery on the component $K_2$ along the slope $-1/n$ for a positive integer $n$, i.e., after twisting $n$ times along $K_2$, the component $K_1$ becomes a knot in $S^3$. 
We call such a knot a \textit{torti-rational knot} and denote it by $K(\beta/\alpha;n)$. 
See \cite{Eudave-Munozetal2021, HirasawaMurasugi2010} for properties of torti-rational knots. 

We will show Theorem~\ref{ThmTorti}: certain hyperbolic torti-rational knots satisfy Conjectures \ref{Conj1} and \ref{Conj6}. 

In the following, let $[a_1,a_2, \dots, a_n]$ denote the continued fraction expansion of a rational number as follows. 
\[
[a_1,a_2, \dots, a_n]
= \frac{1}{a_1 + \frac{1}{a_2 + {\dots + \frac{1}{a_n}}}} 
\]
We follow the convention for two-bridge links used in \cite{Ichihara2012}, which is different (actually, the mirror image) of the standard 
one used in many knot theory textbooks. 
See Figure~\ref{L33} for example. 

\begin{figure}[htb]
 \centering
\includegraphics[width=.8\textwidth]{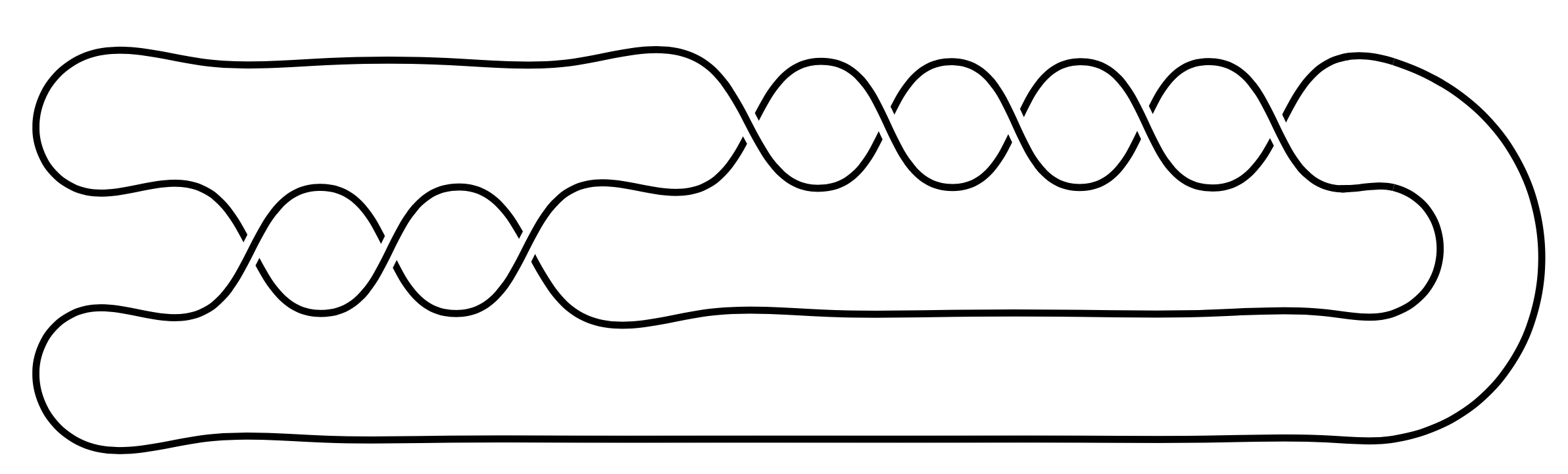}
\caption{$L_{[3,5]}$}\label{L33}
\end{figure}

For two-bridge links associated to continued fractions, the following holds by \cite[Theorem 2.1.11]{KawauchiBook}. 
(We remark that the two-bridge link in their notation is the mirror image of ours, but the result still holds.) 

\begin{lemma}\label{Lem1}
Let $[a_1, a_2, \dots, a_n]$ and $[b_1,b_2,\dots, b_m]$ be continued fractions such that all $a_i$'s and $b_j$'s are positive and none of $| a_1 | , | a_n | , | b_1 | , | b_m |$ is one. 
Then the corresponding two-bridge links $L_{[a_1, a_2, \dots, a_n]}$ and $L_{[b_1, b_2, \dots, b_m]}$ are equivalent (as unoriented links) by an orientation-preserving homeomorphism of $S^3$ if and only if $m = n$ and either $a_i = b_i$ or $a_i = \epsilon b_{m-i}$ with $\epsilon =(-1)^{m-1}$ for all $i$. 
\end{lemma}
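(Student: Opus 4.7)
The plan is to invoke Schubert's classical classification of two-bridge links: $L_{\beta/\alpha} \cong L_{\beta'/\alpha'}$ as unoriented links via an orientation-preserving homeomorphism of $S^3$ if and only if $\alpha = \alpha'$ and $\beta' \equiv \beta^{\pm 1} \pmod{\alpha}$. Given this, the lemma becomes a dictionary translating the arithmetic condition on $\beta, \beta'$ into a combinatorial condition on their continued fraction sequences. Because the paper's convention for $L_{\beta/\alpha}$ is the mirror image of Kawauchi's, a preliminary check is needed that Schubert's statement survives intact (mirror symmetry sends $\beta$ to $-\beta \bmod \alpha$, which is absorbed into the $\pm 1$ exponent).

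First I would verify uniqueness: under the hypotheses that all entries are positive and $|a_1|, |a_n| > 1$, the fraction $\beta/\alpha = [a_1,\dots,a_n] \in (0,1)$ determines the sequence $(a_1, \dots, a_n)$ (and hence $n$) uniquely. This follows from the Euclidean algorithm, the only obstruction being the standard ambiguity $[\ldots, a_n] = [\ldots, a_n - 1, 1]$, which is ruled out by $|a_n| > 1$.

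Next I would compute the effect of reversal. Writing the defining Möbius composition as a product of symmetric matrices
\[
\begin{pmatrix} 0 & 1 \\ 1 & a_1 \end{pmatrix} \cdots \begin{pmatrix} 0 & 1 \\ 1 & a_n \end{pmatrix} = \begin{pmatrix} p & \beta \\ q & \alpha \end{pmatrix},
\]
the reversed continued fraction $[a_n, \dots, a_1] = \gamma/\alpha$ is read off from the transpose of this product. Comparing entries and using $\det = (-1)^n$ yields $\beta\gamma \equiv (-1)^{n-1} \pmod{\alpha}$, i.e.\ $\gamma \equiv \epsilon \beta^{-1} \pmod{\alpha}$ with $\epsilon = (-1)^{n-1}$. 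Hence reversal of the continued fraction exactly realizes Schubert's ``inverse mod $\alpha$'' branch, up to the global sign $\epsilon$. The ``if'' direction now follows immediately: $a_i = b_i$ gives identical fractions hence identical links, while $a_i = \epsilon b_{n+1-i}$ makes $\beta'/\alpha$ and $\gamma/\alpha$ agree up to $\epsilon$, so by Schubert the links are equivalent. For the ``only if'' direction, Schubert forces $\alpha = \alpha'$, and uniqueness of the restricted CF then forces $m = n$; either $\beta' = \beta$ (so $b_i = a_i$ termwise by uniqueness) or $\beta' \equiv \beta^{-1} \pmod{\alpha}$, in which case $\beta' \equiv \epsilon \gamma \pmod{\alpha}$ and comparing with the unique CF for $\gamma/\alpha$ gives $b_i = \epsilon a_{n+1-i}$.

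The main obstacle is the sign bookkeeping. The parity sign $(-1)^{n-1}$ from the matrix transposition must be reconciled with the $\pm 1$ exponent in Schubert and with the paper's mirror-image convention. A particularly delicate case is even $n$, where $\epsilon = -1$ makes the relation $a_i = -b_{n+1-i}$ impossible for positive entries; one must then check that the second ``or'' branch is genuinely vacuous and that Schubert's two branches coincide in this case, so that no classification information is lost. Once this sign analysis is carried out cleanly, the rest of the argument is a routine assembly of the pieces above.
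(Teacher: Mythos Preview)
Your approach is correct in outline and is in fact more detailed than what the paper does: the paper does not prove this lemma at all but simply cites it as \cite[Theorem~2.1.11]{KawauchiBook}, with a one-line remark that the mirror-image discrepancy in conventions does not affect the statement. So there is no ``paper's proof'' to compare against beyond that citation.

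Your derivation from Schubert's classification is the standard route to Kawauchi's theorem, and the matrix-transpose computation showing that reversal of the continued fraction realizes $\beta \mapsto (-1)^{n-1}\beta^{-1} \pmod{\alpha}$ is exactly right. Your flagged concern about the even-$n$ case is legitimate but resolves cleanly: when $n$ is even and $\beta' \equiv \beta^{-1} \pmod{\alpha}$ with $\beta' \neq \beta$, one computes $\beta' = \alpha - \gamma$ where $\gamma/\alpha = [a_n,\dots,a_1]$, and the unique positive continued fraction of $(\alpha-\gamma)/\alpha$ is $[1, a_n - 1, a_{n-1}, \dots, a_1]$, which begins with $1$ and therefore violates the hypothesis $|b_1| > 1$. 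Hence under the stated hypotheses the inverse branch is genuinely vacuous for even $n$, and the lemma's second alternative $a_i = \epsilon b_{m+1-i}$ with $\epsilon = -1$ correctly records this by being unsatisfiable. (Note the paper writes $b_{m-i}$, which appears to be a typo for $b_{m+1-i}$; your indexing is the intended one.) With this case analysis made explicit, your argument is complete.
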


We also remark that a two-bridge link $L_{[a_1, a_2, \dots, a_n]}$ is equivalent to $L_{[a_n, a_{n-1}, \dots, a_1]}$ if $n$ is odd (respectively, $L_{[-a_n, -a_{n-1}, \dots, -a_1]}$ if $n$ is even) by an orientation-preserving homeomorphism of $S^3$.

\bigskip

Now Theorem~\ref{ThmTorti} follows from the following two lemmas. 

\begin{lemma}\label{lemTR1}
Let $K = K(\beta/\alpha;n)$ be a hyperbolic torti-rational knot with $n \ge 4$. 
If a slope $r$ is an exceptional slope for $K$, then the slope $r' = r - n l^2$ is a Seifert slope or a toroidal slope for one component of the hyperbolic two-bridge link $L_{\beta/\alpha} = K_1 \cup K_2$, where $l = lk (K_1, K_2)$. 
If a slope $r'$ is a Seifert slope (respectively, a toroidal slope) for one component of a hyperbolic two-bridge link $L_{\beta/\alpha} = K_1 \cup K_2$, then $r = r' + n l^2$ is a Seifert slope (resp., toroidal slope) for $K = K(\beta/\alpha;n)$, where $l = lk (K_1, K_2)$. 
\end{lemma}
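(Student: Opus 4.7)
The plan is to set up the standard twisting correspondence between Dehn fillings of $K = K(\beta/\alpha;n)$ and of the parent two-bridge link $L_{\beta/\alpha} = K_1 \cup K_2$, and then transfer the non-hyperbolic structure across this correspondence using geometrization together with the hypothesis $n \ge 4$.

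First I would establish the correspondence. Since both components of a two-bridge link are unknotted, the $-1/n$ Dehn surgery on $K_2$ returns $S^3$ with $K_1$ becoming $K$; equivalently, the exterior $E(K)$ is $E(L_{\beta/\alpha})$ Dehn-filled along slope $-1/n$ on $\partial N(K_2)$. Writing $\mu, \lambda$ (resp.\ $\mu_1, \lambda_1$) for the meridian and preferred longitude of $K$ in the new $S^3$ (resp.\ of $K_1$ in the original $S^3$), the $n$-fold twist about $K_2$ changes the self-linking of $K_1$ by $nl^2$, so $\mu = \mu_1$ and $\lambda = \lambda_1 - nl^2 \mu_1$. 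Hence a slope $r$ on $\partial N(K)$ is identified with the slope $r' = r - nl^2$ on $\partial N(K_1)$, and the filled manifolds agree: $K(r) = L_{\beta/\alpha}(r', -1/n)$.

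For the forward direction, suppose $r$ is exceptional for $K$, and set $N := E(L_{\beta/\alpha})(r' \text{ on } K_1)$, a $3$-manifold with one torus boundary satisfying $N(-1/n) = K(r)$. If $N$ were hyperbolic, then the $6$-Theorem of Agol and Lackenby, combined with Neumann--Zagier length asymptotics for the cusp of $N$ coming from partial Dehn filling of $E(L_{\beta/\alpha})$, would imply that for $n \ge 4$ the slope $-1/n$ has length exceeding $6$ on $\partial N$, making $N(-1/n) = K(r)$ hyperbolic and contradicting the exceptionality of $r$. Hence $N$ is non-hyperbolic; since fillings of two-bridge link exteriors are irreducible, geometrization forces $N$ to be Seifert fibered or to contain an essential torus, giving that $r'$ is a Seifert or toroidal slope for $K_1$ in $L_{\beta/\alpha}$.

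For the reverse direction, if $r'$ is a Seifert (resp.\ toroidal) slope for $K_1$, then $N$ is Seifert fibered (resp.\ contains an essential torus). A Seifert fibering of $N$ extends to a Seifert fibering of $N(-1/n)$ unless $-1/n$ equals the regular fiber slope on $\partial N$, and an essential torus of $N$ remains essential in $N(-1/n)$ unless the filling slope compresses it; only finitely many slopes are excluded in each case, and $n \ge 4$ avoids them. Thus $K(r) = N(-1/n)$ is Seifert fibered (resp.\ toroidal), completing the correspondence. The main obstacle I anticipate is the quantitative cusp-length estimate in the forward direction, namely verifying that $n \ge 4$ is sufficient to apply the $6$-Theorem uniformly in $r'$; a cleaner alternative would be to bypass this by invoking the explicit classification of exceptional surgeries on hyperbolic two-bridge links and verifying the correspondence case by case for the finite list of non-hyperbolic $L_{\beta/\alpha}(r', -1/n)$ that actually arise.
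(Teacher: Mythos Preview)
Your setup of the twisting correspondence $K(r) \cong L_{\beta/\alpha}(r',-1/n)$ is correct and matches the paper. The gaps are in both directions where you need the bound $n \ge 4$ to do actual work.

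\textbf{Forward direction.} You already flag the obstacle, and it is real: Neumann--Zagier asymptotics describe slope lengths as the filling parameter tends to infinity, not for a fixed value like $n=4$. Worse, the cusp geometry of $N = L_{\beta/\alpha}(r',*)$ depends on $r'$; as $r'$ approaches an exceptional slope of $L_{\beta/\alpha}$ the cusp of $N$ degenerates, so there is no evident uniform lower bound on the length of $-1/n$ over all hyperbolic $N$. The paper does not attempt this estimate; it cites a result of Ichihara--Jong--Masai on exceptional surgeries of two-bridge links which gives precisely the implication ``$L_{\beta/\alpha}(r',-1/n)$ non-hyperbolic $\Rightarrow$ $L_{\beta/\alpha}(r',*)$ non-hyperbolic'' under $n \ge 4$. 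Your fallback suggestion (case-by-case via the classification) is closer to what actually happens. Also, the assertion that fillings of two-bridge link exteriors are irreducible, and that the non-hyperbolic ones are Seifert or toroidal, is itself a theorem (Ichihara 2012), not a general fact you can invoke from geometrization alone.

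\textbf{Reverse direction.} Here the gap is more serious because you do not flag it. Saying ``only finitely many slopes are excluded, and $n \ge 4$ avoids them'' is exactly what must be proved. In the Seifert case you need to know the fiber slope on $\partial N$ is not $-1/n$; the paper establishes this by identifying $K_2$ concretely as a non-core torus knot in a lens space, whose cabling (fiber) slope is an integer and hence distinct from $-1/n$. In the toroidal case the paper again uses the structural description from Ichihara 2012: $K_2$ becomes a satellite knot whose companion is a torus knot and whose pattern is either a $(2,k)$-cable or hyperbolic. Persistence of the essential torus under $-1/n$-filling then follows from Gordon's cable surgery lemma in the first subcase and from the CGLS cyclic surgery theorem (distance $\ge 2$ from the compressing slope $1/0$) in the second. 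Your abstract ``essential torus survives except for finitely many slopes'' does not supply the bound $n \ge 4$; without the concrete identification of $K_2$ inside $N$ you have no way to locate those finitely many slopes.
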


\begin{proof}
For $\beta/\alpha$ and $n \ge 4$, let $K = K(\beta/\alpha;n)$ be the torti-rational knot and $L_{\beta/\alpha} = K_1 \cup K_2$ the two-bridge link with $l = lk (K_1, K_2)$.
We assume both are hyperbolic. 

Given a slope $r$ for $K$,
we note that the manifold $K(r)$ is also obtained by Dehn surgery on the two-bridge link $L_{\beta/\alpha}$ along the slope $(r', -1/n)$. That is, $K(r) \cong L_{\beta/\alpha} (r',-1/n)$, where $r' = r - n l^2$. 
For calculation of surgery slopes, see \cite{RolfsenBook} for example.

Suppose first that a slope $r$ is an exceptional slope for $K$.
Then, by the result of \cite{IchiharaJongMasai2019} together with the assumption $n \ge 4$, if 1) $L_{\beta/\alpha} = K_1 \cup K_2$ is hyperbolic, 2) $K$ is hyperbolic, and 3) 
$K(r) \cong L_{\beta/\alpha} (r',-1/n)$ is non-hyperbolic; then $L_{\beta/\alpha} (r',*)$ is also non-hyperbolic, where $*$ means that $K_2$ remains unfilled and $r' = r - n l^2$. 
That is, the slope $r'$ is exceptional for a component of $L_{\beta/\alpha} = K_1 \cup K_2$. 
Such an exceptional slope must be a Seifert slope or a toroidal slope by \cite[Theorem 1.1]{Ichihara2012}. 

Suppose next that a slope $r'$ is a Seifert slope or a toroidal slope for one component, say $K_1$, of a hyperbolic two-bridge link $L_{\beta/\alpha} = K_1 \cup K_2$.
As noted above,  $L_{\beta/\alpha} (r',-1/n) \cong K(r) $ holds, where $r = r' + n l^2$. 

If $r'$ is a Seifert slope, then, by the proof of \cite[Theorem 1.1]{Ichihara2012}, $K_2$ becomes 
a non-trivial non-core torus knot $K'$ in a lens space after $r'$-surgery on $K_1$. 
This shows that $K(r)$ is obtained by some non-integral surgery on $K'$ and
is a small Seifert fibered manifold.

If $r'$ is a toroidal slope, then, by the proof of \cite[Theorem 1.1]{Ichihara2012}, $K_2$ becomes 
a non-trivial satellite knot $K'$ in a lens space after $r'$-surgery on $K_1$. 
Moreover the companion knot of $K_2$ is a torus knot in the lens space, and the pattern of $K'$ 
is either hyperbolic or a $(2,k)$-cable for some integer $k$. 
If the pattern of $K$ is a $(2,k)$-cable for some $k$, then 
$-1/n$-surgery 
on $K'$ is toroidal for $n \ge 4$ by \cite[Lemma 7.2]{Gordon83}. 
On the other hand,
if the pattern of $K$ is hyperbolic, then 
$-1/n$-surgery 
on $K'$ is toroidal for $n \ge 4$ by \cite[Theorem 2.0.1]{CGLS}, since $1/0$-surgery on $K'$ compresses the satellite torus for $K'$. 
\end{proof}

\begin{lemma}\label{lemTR2}
The exceptional slopes of a component, $K$, of a 
hyperbolic two-bridge link $L$ occur in one of two ways.
Either 
(i) there is a toroidal boundary slope which is the only non-trivial exceptional slope for $K$, or else 
(ii) there are a pair of integral toroidal boundary slopes, and the non-trivial exceptional slopes for $K$ are the integers in the interval they bound.
\end{lemma}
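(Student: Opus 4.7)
The plan is to invoke the classification of hyperbolic two-bridge links whose components admit non-hyperbolic Dehn fillings, as given by Theorem 1.1 of \cite{Ichihara2012} (the same theorem already used in the proof of Lemma~\ref{lemTR1}). That result both enumerates the relevant links and establishes that every exceptional slope on such a component $K$ is either toroidal or Seifert, reducing the lemma to a case analysis over an explicit list of parameterized families of two-bridge links.

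For each link $L$ in that classification, and each component $K$ of $L$, I would tabulate the non-trivial exceptional slopes together with their type (toroidal or Seifert). In parallel, I would determine the boundary slopes of $L$ viewed from $K$, either by direct appeal to the Hatcher-Thurston algorithm adapted to two-bridge link complements or by transporting slope data from the knot analyses carried out in the proofs of Theorems~\ref{ThmAlt} and \ref{ThmMont}. The goal is to verify case by case that either (i) $K$ has exactly one non-trivial exceptional slope and it is toroidal, or (ii) the non-trivial exceptional slopes form a set of consecutive integers whose minimum and maximum are themselves integral toroidal boundary slopes. A useful auxiliary observation is that, since each exceptional slope is toroidal or Seifert and the distance bounds between such slopes on a single torus boundary are small, any Seifert slope is forced to sit close to a toroidal slope, which is what makes the integral-interval structure in case (ii) tight.

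The main obstacle is simply the bookkeeping for the full list in the classification: some of Ichihara's families are parameterized by a continued-fraction entry, and one needs to check that the pattern described in (i) or (ii) persists uniformly across each family rather than only on a few sporadic examples. I expect this to work by writing each family's exceptional slopes and boundary slopes as explicit rational functions of the parameter, after which the claim becomes a finite verification. The pattern is already strongly suggested by the examples collected in the proof of Theorem~\ref{ThmMont}, where exactly the dichotomy between a lone toroidal slope and a run of integers capped by two toroidal slopes appears, and I expect the two-bridge link components to behave analogously. Once this verification is complete, the statement of Lemma~\ref{lemTR2} follows immediately.
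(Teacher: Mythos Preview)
Your high-level plan---invoke \cite[Theorem 1.1]{Ichihara2012} and verify the dichotomy family by family---matches the paper's strategy, but the proposal underestimates the real work and leaves a genuine gap. The classification in \cite{Ichihara2012} tells you that if $K_1$ has a Seifert slope then $L$ is equivalent to some $L_{[2w+1,2u+1]}$, and that if $K_1$ has a toroidal slope then $L$ is equivalent to some $L_{[2w',v',2u']}$; it does \emph{not} hand you a complete list of exceptional slopes for each link. A single link can have several such continued-fraction presentations, each contributing a toroidal slope, and proving case (ii) requires showing there are \emph{exactly two} such presentations (giving the two endpoints), while case (i) requires showing there is \emph{exactly one}. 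Your phrase ``tabulate the non-trivial exceptional slopes'' hides precisely this difficulty.

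The paper resolves it with ingredients your plan does not mention: an explicit list of simple continued-fraction normal forms for $[2w,v,2u]$ (Claim~\ref{Clm1}) combined with Kawauchi's uniqueness criterion for two-bridge links (Lemma~\ref{Lem1}), which together pin down all possible $(w',v',u')$ with $L_{[2w,v,2u]} \cong L_{[2w',v',2u']}$ and hence all toroidal slopes. For the small-parameter cases ($w=1$ or $u\in\{1,-2\}$) the paper instead routes through the magic manifold and the Martelli--Petronio tables. Neither the Hatcher--Thurston algorithm nor the distance bounds you propose are used: distance bounds constrain how far apart toroidal slopes can be but do not establish that there are only two, and transporting boundary-slope data from the knot results in Theorems~\ref{ThmAlt} and \ref{ThmMont} does not apply to link components. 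To complete your argument you would need to supply this continued-fraction uniqueness analysis (or an equivalent).
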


\begin{proof}
Let $L_{\beta/\alpha} = K_1 \cup K_2$ be a hyperbolic two-bridge link. 

First suppose that there is a Seifert slope $r$ for one component, say $K_1$, of $L_{\beta/\alpha}$. 
Then, by \cite[Theorem 1.1]{Ichihara2012}, the link $L_{\beta/\alpha}$ is equivalent to $L_{[2w+1,2u+1]}$ for $w \ge 1$, $u \ne  0,-1$. 
There are several cases\footnote{There is a typo in the statement of \cite[Theorem 1.1]{Ichihara2012}. 
There, (b) should be $L_{[2w+1,-3]}$ since the proof of Claim 4 in the paper, which states the corresponding part of the theorem, depends essentially on \cite[Theorem 11.1(2)]{GodaHayashiSong2009}.
That theorem says that $L([2w + 1, -3])[-w - 1]$ gives a torus knot in a lens space.
However, since $L([2w + 1, -3])[-w - 1]$ is homeomorphic to $L([3, 2(-w-1) + 1)])[-w - 1]$, case (b) is actually included in case (a). Thus we will
ignore it in this paper.} 
depending on the values of $w$ and $u$.

If $w \ge 2$ and $u \ne 1,0,-1,-2$, then $L_{[2w+1,2u+1]}$ admits a
single Seifert slope $r = -w + u$ on $K_1$ by \cite[Theorem 1.1]{Ichihara2012}. 

For the case of $w \ge 2$ and $u \ge 2$, 
by calculating the
continued fractions, we see that $[2w+1,2u+1] = [2w,1,2(-u-1)] = [2(w+1),-1,-2u]$. 
Then, again by \cite[Theorem 1.1]{Ichihara2012}, the slopes $ -w - (-u-1) = -w+u+1$ and $ -(w+1) - (-u) = -w+u-1$ are both toroidal slopes on $K_1 \subset L_{[2w+1,2u+1]}$. 
That is, $L_{\beta/\alpha} (-w+u+1,*)$ and $L_{\beta/\alpha} (-w+u-1,*)$ 
both contain essential tori.

To complete the proof for the case of $w \ge 2$ and $u \ge 2$, we need to show that there are no other exceptional (i.e., toroidal) slopes for $K_1 \subset L_{[2w+1,2u+1]}$ in this case. 

We use the following claim that
can be shown by direct calculation of continued fraction expansions; we omit the details.

\begin{claim}\label{Clm1}
Suppose that $w = 1, u = -1, |v| \ge 2$ or $w \ge 2, |u| \ge 2, |v| \ge 1$.
Then the simple continued fraction (i.e., all terms positive) for $[2w,v,2u]$ is as follows. 
\begin{align}
&[2w,v,2u] \ 
& \mbox{ if } v,u >0 \\
&[2w,1,2u]= [2w+1, -2u-1]  & \mbox{ if } v=1,u \le -2 \\
&[2w,v,-2]= [2, v-1,2]   & \mbox{ if } v \ge 2, u =-1, (w=1)\\
&[2w,v-1,1,-2u-1] & \mbox{ if } v \ge 2, u \le -2 \\
&[2w,-1,2u] = [2w-2,1,2u-2] & \mbox{ if } v = -1, u \ge 2 , (w \ge 2) \\
&[2w,-2,2u]=[2w-1,2,2u-1] & \mbox{ if } v = -2, u \ge 2, (w \ge 2)  \\
&[2w-1,1,-v-2,1,2u-1] & \mbox{ if } v \le -3, u \ge 2 , (w \ge 2) \\
&[2w,-1,2u] = [2w-1,-2u+1] & \mbox{ if } v = -1, u \le -2 , (w \ge 2) \\
&[1,1,-v-1,2] & \mbox{ if } v \le -2, u = -1, (w=1)\\
&[2w-1,1,-v-1,-2u] & \mbox{ if } v \le -2, u \le -2, (w \ge 2) 
\end{align}
\end{claim}

Assume for a contradiction that $K_1$ 
admits some other toroidal surgery. 
This implies that $L_{[2w+1,2u+1]}$ with $w \ge 2$ and $u \ge 2$ admits a toroidal slope on 
one of its components.
Then, by \cite[Theorem 1.1]{Ichihara2012}, $L_{[2w+1,2u+1]}$ is equivalent to $L_{[2w',v',2u']}$
with $w' = 1$, $u' = -1$, $|v'| \ge 2$, or $w' \ge 2$, $|u'| \ge 2$, $|v'| \ge 1$. 
As already seen, $[2w+1,2u+1] = [2w,1,2(-u-1)] = [2(w+1),-1,-2u]$ holds, and so, $(w',v',u')=(w,1,-u-1)$ (Claim~\ref{Clm1} (1)) or $(w',v',u')=(w+1,-1,-u)$ (Claim~\ref{Clm1} (8)) actually happen, but no other cases can happen by Claim~\ref{Clm1} together with Lemma~\ref{Lem1}, except for the case $(w',v',u')=(1,v,-1)$ with $v \le -2$ (Claim~\ref{Clm1} (9)). 
(In this case, Lemma~\ref{Lem1} cannot be applied directly, for the first term of $[1,1,-v-1,2]$ is 1.) 
For this case, we see that 
\begin{align*}
L_{[2w+1,2u+1]} & \cong L_{[-2w-1,-2u-1]}  \\
& \not\cong L_{[-2,v+1,-2]} 
\cong L_{[-2,v+1,-1,-1]} 
\cong L_{[1,1,-v-1,2]} \cong L_{[2,v,-2]}
\end{align*}
by Lemma~\ref{Lem1}, where $\cong$ denotes the equivalence of knots by orientation-preserving homeomorphism of $S^3$. 
This implies that $L_{[2w+1,2u+1]}$ with $w \ge 2$ and $u \ge 2$ only admits toroidal slopes $-w+u-1$ and $-w+u+1$ on $K_1$. 
That is, the exceptional slopes for a component of the link are;
\[
-w+u-1_{(T)} , -w+u_{(S)} , -w+u+1_{(T)}. 
\]

For the case of $w \ge 2$ and $u \le -3$, we can argue similarly. 
If $L_{[2w+1,2u+1]}$ with $w \ge 2$ and $u \le -3$ 
admits a toroidal slope on a component, $K$, 
then $L_{[2w+1,2u+1]}$ is equivalent to $L_{[2w',v',2u']}$
with $w' = 1$, $u' = -1$, $|v'| \ge 2$, or $w' \ge 2$, $|u'| \ge 2$, $|v'| \ge 1$ by \cite[Theorem 1.1]{Ichihara2012}. 
Since $[2w+1,2u+1]$ has the simple continued fraction $[2w,1,-2u-2]$, the possible cases are only $(w',v',u') =(w,1,-u-1)$  (Claim~\ref{Clm1} (1)) or $(w',v',u') =(w+1,-1,-u)$  (Claim~\ref{Clm1} (5)) by 
Claim~\ref{Clm1}, Lemma~\ref{Lem1}, and the above argument. 
In these cases, by \cite[Theorem 1.1]{Ichihara2012}, $ -w' -u' = -w - (-u-1) = -w+u+1$ and $ -w'-u'= -(w+1) - (-u) = -w+u-1$ are both toroidal slopes on the component $K$ 
of $L_{[2w+1,2u+1]}$ with $w \ge 2$ and $u \le -3$. 
This implies that $L_{[2w+1,2u+1]}$ with $w \ge 2$ and $u \le -3$ only admits toroidal slopes $-w+u-1$ and $-w+u+1$ on 
the component $K$. 
That is, the exceptional slopes for a component of the link are;
\[
-w+u-1_{(T)} , -w+u_{(S)} , -w+u+1_{(T)}. 
\]

There remain the cases where $w=1$ or $u=1$ or $u=-2$. 
In fact, these can be treated simultaneously, for 
\[ L_{[3,2u+1]} \cong L_{[-2u-1,-3]} \cong ( L_{[2w+1,3]} )^* \]
holds by setting $u=w$, and 
\[ L_{[3,2u+1]} \cong L_{[-2u-1,-3]} \cong L_{[2w+1,-3]} \]
holds by setting $u=-w-1$. 
Thus we only consider the link $L_{[3,2u+1]}$ with $u \ne 0,-1$. 

Although we can apply the same arguments as above, 
in this case, we may use a simpler argument
due to \cite{MartelliPetronio2006}. 
That's because, 
in this case, $L_{[3,2u+1]}(r,*)$ is homeomorphic to $N(-\frac{1}{u+1},r-u-1)$, where $N$ is the exterior of the minimally twisted 3-chain link, that is, the so-called
``magic manifold'' \cite{MartelliPetronio2006}. 
See Figure~\ref{L3MT3C}.

\begin{figure}[htb]
\centering
\includegraphics[width=.95\textwidth]{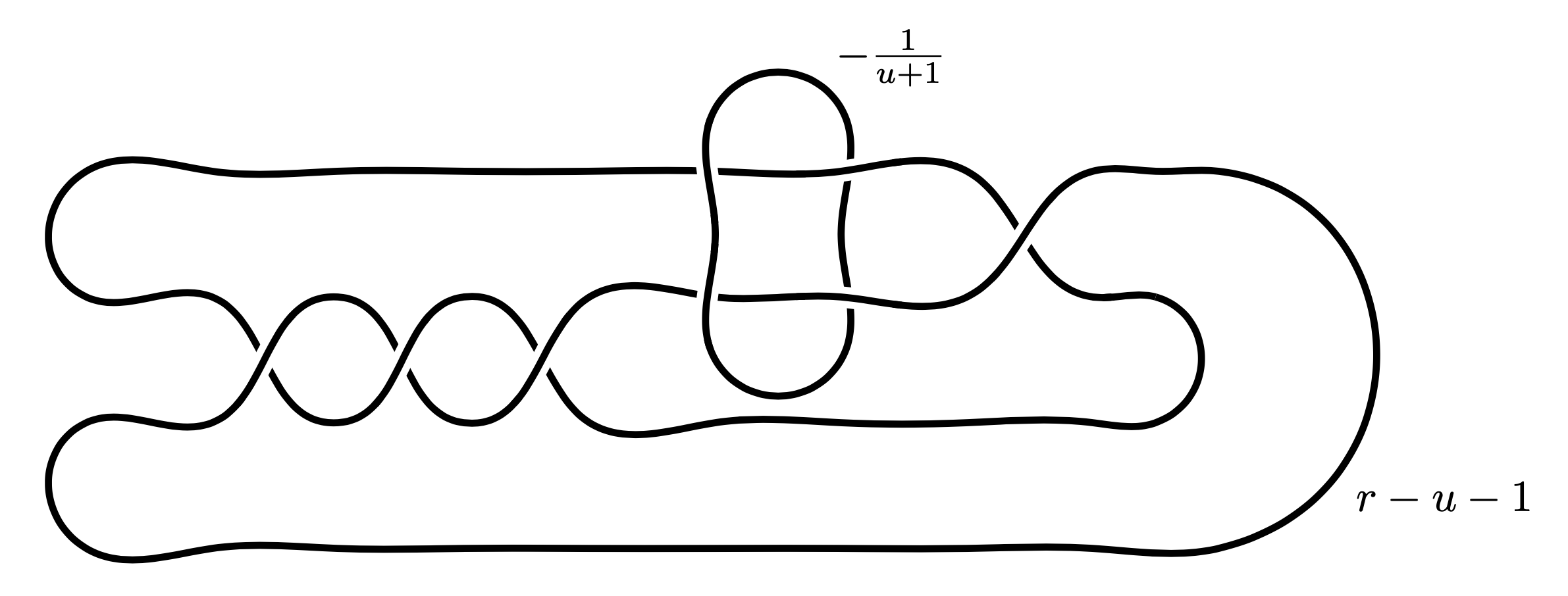}
\caption{$L_{[3,2u+1]}(r,*) \cong N(-\frac{1}{u+1},r-u-1) $}\label{L3MT3C}
\end{figure}

Suppose that $L_{[3,2u+1]}(r,*)$ is non-hyperbolic and $L_{[3,2u+1]}$ is hyperbolic. 
Note that $u \ne 0,-1$ by the assumption that $L_{[3,2u+1]}$ is hyperbolic. 
It follows that $-\frac{1}{u+1}$ is not an integer. 
Then, by \cite[Theorems 1, 2]{MartelliPetronio2006}, we have the following cases. 

If $ r-u-1 \in \{ -3,-2,-1,0 \}$, i.e., $ r = u-2,u-1,u,$ or $u+1$, then $L_{[3,2u+1]}(r,*)$ is non-hyperbolic. 
Moreover, by \cite[Tables 1, 2]{MartelliPetronio2006}, if $u \ne -2,1$, the exceptional slopes for a component of $L_{[3,2u+1]}$ are;
\[
u-2_{(T)} , u-1_{(S)} , u_{(S)}, u+1_{(T)}. 
\]

In the case of $u=-2$, if $L_{[3,-3]}(r,*)$ is non-hyperbolic, then $r=0$ can occur, in addition to $ r \in \{ -4,-3,-2,-1 \}$. 
Thus, by \cite[Tables 1, 2]{MartelliPetronio2006}, the exceptional slopes for a component of $L_{[3,-3]}$ are;
\[
-4_{(T)} , -3_{(S)} , -2_{(S)}, -1_{(S)}, 0_{(T)}. 
\]
Actually $L_{[3,-3]}$ is the Whitehead link, 
for which these results 
are well-known. 
See \cite[Tables A.1]{MartelliPetronio2006}. 

In the case of $u=1$, if $L_{[3,3]}(r,*)$ is non-hyperbolic, then $r=-2$ can occur, in addition to $ r \in \{ -1,0,1,2 \}$. 
Thus, by \cite[Tables 1, 2]{MartelliPetronio2006}, the exceptional slopes for a component of $L_{[3,3]}$ are;
\[
-2_{(T)} , -1_{(S)} , 0_{(S)}, 1_{(S)}, 2_{(T)}. 
\]
Actually $L_{[3,3]} = L_{3/10}$ is also 
a well-known link. 
See \cite[Tables A.1]{MartelliPetronio2006}. 

\bigskip

Next suppose that there are only toroidal slopes for one component, say $K_1$, of $L_{\beta/\alpha}$. 
We will show that $K_1$ admits only one toroidal slope.  

In this case, by \cite[Theorem 1.1]{Ichihara2012}, the link $L_{\beta/\alpha}$ is equivalent to $L_{[2w,v,2u]}$ with $w = 1$, $u = -1$, $|v| \ge 2$, or $w \ge 2$, $|u| \ge 2$, $|v| \ge 1$. 
And suppose further that the knot admits another toroidal slope. 
Then, in the same way, $L_{\beta/\alpha}$ is equivalent to $L_{[2w',v',2u']}$
with $w' = 1$, $u' = -1$, $|v'| \ge 2$, or $w' \ge 2$, $|u'| \ge 2$, $|v'| \ge 1$, and $(w,v,u) \ne (w',v',u')$. 
Now, by Claim~\ref{Clm1} together with 
Lemma~\ref{Lem1}, it suffices
to consider the 10 cases each for $(w,v,u)$ and $(w',v',u')$. 
However, by Claim~\ref{Clm1} together 
with Lemma~\ref{Lem1}, it is unnecessary to consider cases where 
the lengths of their simple continued fraction expansions are different unless $v \le -2, u = -1, w = 1$  (Claim~\ref{Clm1} (9)). 

We look at a limited number of cases in the following. The remaining cases can be shown in the same way. 

Consider the case that $u,v>0$. 
Then $[2w,v,2u]$ is a simple continued fraction. 
If the simple continued fraction associated to $[2w',v',2u']$ is coincident to $[2w,v,2u]$, then 
we have $(w',v',u')=(w+1,-1,u+1)$ and $v=1$ (Claim~\ref{Clm1} (2)). 
In this case, $[2w,1,2u]=[2w+1,-2u-1]$ also holds. 
This implies that $L_{[2w,1,2u]} \cong L_{[2(w+1),-1,2(u+1)]} \cong L_{[2w+1,-2u-1]}$ admits a Seifert slope $r=-w+(-u-1)=-w-u-1$ on one component, contradicting that $K_1$ has only toroidal surgeries. 
The cases $v=1, u \le -2$, $v =-1, u \ge 2$ and $v = -1, u \le -2$ are similar. 

Consider the case that $v \ge 2, u =-1, (w=1)$. 
Then $[2w,v,2u]$ has the simple continued fraction $[2,v-1,2]$. 
Under the conditions for $(w',v',u')$, there are no $[2w',v',2u']$ with the simple continued fraction $[2,v-1,2]$ by Claim~\ref{Clm1}. 
The cases $v =-2, u \ge 2$, $v \ge 2, u\le -2$ and $v \le -3, u \ge 2$ are similar. 

Consider the case that $v \le -2, u = -1, (w = 1)$. 
The toroidal slope for a component of $L_{[2,v,-2]}$ is $-1-(-1)=0$. 
Then $[2w,v,2u]=[2,v,-2]$ ($v \le -2$) has the simple continued fraction $[1, 1, -v-1, 2]$  (Claim~\ref{Clm1} (9)). 
\[ L_{[2,v,-2]} \cong L_{[1,1,-v-1,2]} \cong L_{[-2,v+1,-1,-1]} \cong L_{[-2,v+1,-2]}\]
Thus the link is the mirror image of $L_{[2,-v-1,2]}$ with $v \le -2$. 
Under the conditions for $(w',v',u')$, there are no $[2w',v',2u']$ with the simple continued fraction $[2,-v-1,2]$ by Claim~\ref{Clm1}. 

Consequently, if there are only toroidal slopes for a component of $L_{\beta/\alpha}$, 
then, in fact, there is a single toroidal slope. 
\end{proof}

\section{Census knots
\label{SecCensus}%
}

In this section we provide evidence for Conjectures~\ref{Conj1} and \ref{Conj6} using Dunfield's census of 
exceptional Dehn fillings~\cite{DunfieldCensus} on knots with complements that have at most nine tetrahedra.
We prove Theorem~\ref{Thm7tet} that Conjectures~\ref{Conj1} and \ref{Conj6} hold for knots 
of at most seven tetrahedra. We give examples that realize each of the three parts of 
Conjecture~\ref{Conj5} and ask if knots that satisfy part (3) of the conjecture
can always do so by way of a single non-integral boundary slope $b_1 = b_2 = b$.

Dunfield's census includes the complements of 1267 
hyperbolic knots in $S^3$.
Of these, 165 have no non-trivial exceptional surgeries and 418 are such that all non-trivial exceptional surgeries are toroidal. 
In each case, the toroidal surgeries occur as a list of consecutive integers of length between one and three and, 
therefore, both conjectures are verified for these knots.

The remaining knot complements in the census have at
least one non-trivial, non-toroidal exceptional surgery.
For those with at most seven tetrahedra, the $A$-polynomial calculations of Culler~\cite{CullerApoly} provide a substantially complete list of boundary slopes.
(There's no guarantee that the $A$-polynomial 
detects all boundary slopes. However, in practice, it does seem to find almost all of them; for example see \cite{Segerman} for a detailed account.) 
This allows us to prove Theorem~\ref{Thm7tet}.
(Although it would be enough to rely on the $A$-polynomial calculations, 
in the data presented in the appendix, 
for the sake of convenience
we also used boundary slopes found using Dunfield's program
for two-bridge and Montesinos knots~\cite{DunfieldProgram} and `Kabaya' boundary
slopes found with SnapPy~\cite{SnapPy}.)

\begin{proof}
(of Theorem~\ref{Thm7tet})
Among the census manifolds with at
most seven tetrahedra, there are 201 that are complements of hyperbolic knots in $S^3$.
In the appendix we list exceptional 
and boundary slopes for each of these knot complements, 
which show that both Conjectures~\ref{Conj1} and 
\ref{Conj6} hold. 
\end{proof}

Note that the complements of hyperbolic
knots in $S^3$ with at most seven tetrahedra have names that begin with `m', `s', or `v'.
The appendix lists all toroidal boundary slopes, but only enough of the other boundary slopes
to verify the conjectures. Let us give the details for two examples as an illustration.

\begin{align*}
\mbox{v0319} \quad & 
[(-62, \mbox{`T'}), -63, -64, (-194/3, \mbox{`C'}), -65, (-206/3, \mbox{`C'})] \\
& [(-2, \mbox{`T'}), -1, 0, (2/3, 
  \mbox{`C'}), 1, (14/3, \mbox{`C'})] \\
 \\
\mbox{v1359} \quad &
[(121/2, \mbox{`CK'}), 59, (176/3, \mbox{`C'}), 58, (57, \mbox{`T'})] \\
& [(-5/2, \mbox{`CK'}), -1, (-2/3, \mbox{`C'}), 0, (1, \mbox{`T'})]
\end{align*}

Using Dunfield's census and SnapPy coordinates, the exceptional slopes of v0319 are $-2,-1,0,1$, with
$-2$ toroidal. For Conjecture~\ref{Conj1}, use $b_1 = -2$
and $b_2 = 14/3$, a slope that we identified using 
Culler's~\cite{CullerApoly} calculation of the $A$-polynomial. For Conjecture~\ref{Conj6}, 
use $b_1 = -2$ and $b_2 = 2/3$, another slope found using the $A$-polynomial. This knot has
other boundary slopes besides $-2$, $2/3$, and $14/3$. Here we only mention slopes that are toroidal or useful for verifying the conjectures. For v1359, the exceptional slopes
are, $-1,0,1$, and we have $b_2 = 1$, 
a toroidal slope, for both conjectures. For Conjecture~\ref{Conj1}, $b_1 = -5/2$, a slope
identified both using Culler's~\cite{CullerApoly} $A$-polynomial calculation and as a Kabaya boundary slope
by SnapPy~\cite{SnapPy}. For Conjecture~\ref{Conj6}, $b_1 = -2/3$, a slope
found using the $A$-polynomial.

Similar to Theorem~\ref{Thm7tet}, for two bridge knots, which are alternating, or Montesinos knots, we 
know the boundary slopes by
\cite{DunfieldProgram}
\cite{HatcherOertel1989}, and \cite{HatcherThurston}
and we can verify both conjectures for these knots as in Theorems~\ref{ThmAlt} and \ref{ThmMont}.
In other words, in all cases where we have reasonably
complete information about the boundary slopes, we
can verify both conjectures.

Finally, there are many examples of knots where we can verify
our two conjectures directly as the set of nontrivial exceptional slopes
is bounded between two toroidal boundary slopes.

There remain the 336 knot complements that have: a non-trivial, non-toroidal exceptional surgery;
a triangulation with eight or nine tetrahedra;
are not two bridge or Montesinos; and either the greatest or
the least exceptional slope is not toroidal.
We have verified both conjectures for many of these knots. However there remain 75 (respectively, 166) knot complements for which we have not been able to verify Conjecture~\ref{Conj1}
(resp., Conjecture~\ref{Conj6}). Of the 75 outstanding knots for Conjecture~\ref{Conj1} more
than twenty satisfy Conjecture~\ref{Conj6}, 
leaving 51 knot complements for which we can verify neither conjecture.

While there remain many outstanding knot complements for each conjecture, we expect that this mainly reflects our ignorance
about the boundary slopes for these knots. To reiterate, in every case 
where we have substantial knowledge about the boundary slopes, we were
able to verify both conjectures.

Let's observe that each part of Conjecture~\ref{Conj5} does, in fact, arise.
As mentioned in the introduction, two bridge knots $K_{[a_1,a_2]}$ with $|a_1|,|a_2| > 2$
and pretzel knots $P(q_1,q_2,q_3)$ with $q_j \neq 0,\pm 1$ for $j = 1,2,3$ are examples
where the only non-trivial exceptional surgery $b$ is toroidal. This means we can 
choose $b_1 = b_2 = b$ for these knots, as in Conjecture~\ref{Conj5} (1). 
Examples that realize (1) with 
with $b_1 < b_2$, both toroidal, include
the twist knots $K_{[2n,2]}$ and $K_{[2n,-2]}$ with $|n| > 2$.
The pretzel knots $P(-2,3,2n+1)$ with $n > 3$ or $n < -1$ provide examples
for Conjecture~\ref{Conj5} (2) with $b_1$ non-integral, $b_2$ toroidal, and $b_1 < b_2$.

Examples that realize Conjecture~\ref{Conj5} (3) include s682,
s769, v1300, v1628, v1940, v1966, v2217, v2759, v2871, v2925, and v3234. Of these,
seven knots: 
s682, v1300, v1628, v2217, v2759, v2871, and v3234 {\em could} be explained with a 
choice of non-integral boundary slopes $b_1 < b_2$. However, these seven also 
satisfy Conjecture~\ref{Conj5} (3) thanks to a single non-integral slope $b$ with
$b_1 = b_2 = b$. For example, s682 has exceptional slopes $-1,0$ and boundary slopes $-3/2$ and $-1/3$. It satisfies
Conjecture~\ref{Conj5} both with the choice $b_1 = -3/2$, $b_2 = -1/3$, 
as well as with $b_1 = b_2 = -1/3$.
All the examples that we know of for Conjecture~\ref{Conj5} (3) can be shown
to satisfy the conjecture by the choice of a single non-integral boundary slope 
$b_1 = b_2 = b$. This is the source of Question~\ref{queConj53} 
mentioned in the introduction.

In the appendix we provide a listing of the 1267 census knots with their exceptional slopes and relevant boundary slopes.
The exceptional slopes come from Dunfield's census. There are several sources for boundary slopes: 
toroidal slopes, which are exceptional slopes, 
from Dunfield's census;
boundary slopes deduced from Culler's A-polynomial calculations~\cite{CullerApoly};
boundary slopes of two bridge~\cite{HatcherThurston} or
Montesinos knots~\cite{HatcherOertel1989};
Brasile and Kabaya boundary
slopes as determined by SnapPy~\cite{Brasile, SnapPy, Kabaya}; and the
longitude, which is the boundary of a Seifert surface.
We thank Nathan Dunfield for helpful conversations 
and for telling us about Brasile boundary slopes.

\section{Finite slopes}

In this section we prove Theorem~\ref{ThmFin}.

Let $r_m$ (respectively, $r_M$) denote the smallest (resp., largest) finite boundary slopes, considered as rational numbers. Conjecture~\ref{Conj1} states that all non-trivial exceptional
surgeries must occur in the interval $[r_m, r_M]$. As progress towards this conjecture, the second author~\cite{MattmanCyclic} showed
that all cyclic slopes occur in the interval $(r_m- \frac12, r_M + \frac12)$. Here, we use a similar approach to show that all finite slopes occur in the interval
$[r_m - \frac52, r_M + \frac52]$. 

The argument is similar to that in~\cite{MattmanCyclic}
and leverages properties of the Culler-Shalen norm.
We refer the reader to that paper for a brief introduction
to the relevant properties. See~\cite{CGLS} or \cite{Boyer}
for a more detailed overview.

A finite slope is either an integer $n$ or a half integer $\frac{n}{2}$~\cite{BoyerZhangFiniteFilling}. In their proof of the Finite Filling Conjecture, 
Boyer and Zhang~\cite{BoyerZhangFiniteFilling} show
that the Culler-Shalen norm of a finite slope is bounded by $\max (2s, s+8)$ where $s$ represents the minimal norm. 
For a hyperbolic knot,  $s \geq 4$~\cite[Lemma 9.1]{BoyerZhangFiniteFilling}, so that $3s$ is always a bound for the norm of a finite filling slope.

Note that it follows from~\cite{IMS}, that a finite 
surgery $n \in \Z$ is in $[r_m - 3, r_M + 3]$, 
while a half-integer slope $\frac{n}{2}$ is
in $[r_m - \frac32, r_M + \frac32]$, see \cite[Theorem 1]{MattmanCyclic}. 

\begin{proof} (of Theorem~\ref{ThmFin})
We split into two cases depending on whether the finite slope is integer or half-integer.
Suppose first that the positive integer $n$ is a finite surgery slope and $\| (n,1) \| = t \leq 3s$. 
As in~\cite{MattmanCyclic}, the argument comes down to showing $n \leq r_M+\frac52$. 
For a contradiction, suppose instead $n > r_M+\frac52$.

\begin{figure}[htb]
\labellist{}
\small\hair 2pt
\pinlabel {$( - \frac{s}{m}, 0)$} at 42 8
\pinlabel {$( \frac{s}{m} , 0)$} at 250 8
\pinlabel $\frac{s}{t}(r_M,1)$ at 169 82
\pinlabel $\frac{s}{t}(n,1)$ at 212 82
\pinlabel $(r_M,1)$ at 163 97
\pinlabel $V$ at 157 133
\endlabellist{}
\centering
\includegraphics[scale=1]{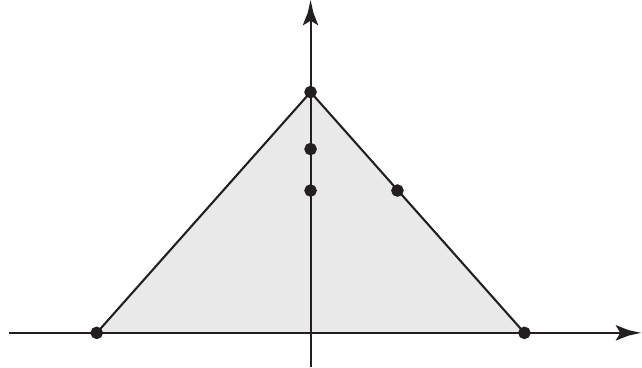}
\caption{The triangle $T$ is shaded.
\label{figT}%
}
\end{figure}

Let $P$ denote the fundamental polygon of the 
Culler-Shalen norm, the set of points of minimal norm $s$. 
Let $\| \mu \| = \| (1,0) \| = m$. Then $ \pm (s/m, 0)$ are points
on the boundary of the fundamental polygon $P$ as is the point $\frac{s}{t}(n,1)$. Choose coordinates so that $(r_M,1)$ appears on the $y$-axis. Extend the
line through $(s/m,0)$ and $\frac{s}{t}(n,1)$ until it intersects the $y$-axis at $V$, which will be a vertex of $P$. 
(See Figure~\ref{figT}. As in \cite{MattmanCyclic}, we can assume that $P$
has a vertex that corresponds to $r_M$.) By convexity, the
segment $(-s/m, 0)$ to $V$ is in $P$ as is the isosceles triangle $T$ with vertices $\pm (s/m,0)$ and $V$. We will argue that the intersection of $T$ 
with the line $y = 1$ is a segment whose length exceeds one. This implies there is at least one lattice point interior to $T \subset P$, which is a contradiction.
Note that if $r_M$ is an integer, then $(r_M,1)$ is itself a lattice point interior to $P$.

Let $w(y)$ denote the width of $T$ at height $y$. 
Note that $w(0) = 2s/m$ and $w(s/t) = 2 \frac{s}{t}( n - r_M)$. 
Then, since $w(y)$ is a linear function of $y$,
\begin{align*}
w(1) = & \frac{2s}{m} - \frac{\frac{2s}{m} - \frac{2s}{t} ( n - r_M) }{\frac{s}{t}} \\
= & \frac{2s}{m}(1 - \frac{t}{s}) + 2 (n - r_M) \\
= & 2(n-r_M) - \frac{2}{m} (t-s),
\end{align*}
which will be larger than 1. Indeed, $s \leq t \leq 3s$ and $s \leq m$ imply that $\frac{2}{m}(t-s)$ is at most four
and we are assuming $2(n - r_M)$ is larger than five. Since $P$ is the polygon defined by the minimal norm $s$, an
interior lattice point would be a slope of norm strictly less than the minimal norm, which is absurd.

Next assume $\frac{n}{2}$ is a finite slope so that $\| (n,2) \| = t \leq 3s$. The argument is similar where we replace $\frac{s}{t} (n,1)$ 
as a point on the polygon with $\frac{s}{t}(n,2)$. However, in this case we can argue that $\frac{n}{2} \leq r_M + 1$. Assume, for a contradiction 
that $\frac{n}{2} > r_M + 1$. Again $w(0) = 2s/m$ and $w(2s/t) =  \frac{2s}{t}(n - r_M)$. Then, 
\begin{align*}
w(1) = & \frac{2s}{m} - \frac{\frac{2s}{m} - \frac{2s}{t} ( n - 2r_M) }{\frac{2s}{t}} \\
= & \frac{2s}{m}(1 - \frac{t}{2s}) + (n - 2r_M) \\
= & (n-2r_M) - \frac{t-2s}{m}.
\end{align*}
Since $s \leq t \leq 3s$ and $s \leq m$, then $\frac{t-2s}{m}$ is at most one. But $\frac{n}{2} - r_M > 1$ means
there will be a lattice point inside the fundamental polygon, which is a contradiction.
\end{proof}

\section*{Acknowledgements}
We very much appreciate Professor Motegi's mentorship and guidance over the years, including providing the
question that is the inspiration for this project. We thank Nathan Dunfield for helpful conversations, for telling us 
about Brasile boundary slopes, and for his wonderful census of knot manifolds through nine tetrahedra.
We are grateful to the referee for an encouraging assessment of our paper and useful suggestions.

\section{Appendix: Slopes of Census Knots}
In the appendix we list exceptional and boundary slopes for the
1267 hyperbolic knots in $S^3$ included in 
Dunfield's~\cite{DunfieldCensus} census. We provide the data
in three \verb".csv" files. 

Among these knot complements, 165 have no non-trivial exceptional surgeries and 418 are such that all non-trivial exceptional surgeries are toroidal. 
These 583 knots are gathered in the file
\verb"TorOnly.csv". 
For each knot, we list the census name, the 
(possibly empty) list of toroidal slopes both in standard and
SnapPy coordinates (as reported in Dunfield's census), 
and the name of the knot, if we know it.
We remark that Baker and 
Kegel~\cite{BakerKegelpaper, BakerKegeldata} have determined 
braid words for every one of the 1267 census knots.
As discussed in Section~\ref{SecCensus}, 
these knots satisfy both Conjecture~\ref{Conj1} and \ref{Conj6}.

The remaining census knots each have at least one non-trivial
non-toroidal exceptional slope. 
Of these, the first group of 348 knots presented in the file 
\verb"Verified348.csv", 
are those for which we can easily
verify both conjectures. Many of these knots have toroidal
slopes as the largest and smallest non-trivial exceptional
slopes that we can use as our $b_1$ and $b_2$.
For the rest we have substantially complete knowledge 
of the boundary slopes for one of two reasons: 
first, if the knot
is two bridge or Montesinos, the boundary slopes are 
determined by Hatcher and Thurston~\cite{HatcherThurston} and 
Hatcher and Oertel~\cite{HatcherOertel1989}; and second, if
the knot has at most seven tetrahedra, we can extract the
boundary slopes from Culler's~\cite{CullerApoly} calculation 
of the $A$-polynomial. 

For each knot, we list the census name, the 
list of exceptional and boundary slopes both in standard and
SnapPy coordinates, and the name of the knot, if we know it.
We include all toroidal boundary slopes, which are exceptional slopes, but otherwise only provide sufficient boundary slopes to verify our two conjectures.

We label each boundary slope with one or more certificates 
C, B, K, L, M, or T: 
C if we use Culler's $A$-polynomial calculation, 
B or K for Brasile or Kabaya boundary slopes as
determined by SnapPy~\cite{Brasile, SnapPy, Kabaya} , 
L for the longitude, which is the 
boundary of a Seifert surface, 
M for boundary slopes of Montesinos
or two bridge knots~\cite{DunfieldProgram}, 
and T for the toroidal slopes, which are
exceptional slopes identified in Dunfield's census.
As discussed in Section~\ref{SecCensus}, 
both Conjectures~\ref{Conj1} and \ref{Conj6}
hold for these knots as we can verify using
our substantially complete knowledge of the boundary slopes.

For a few examples in this and the final \verb".csv" file, 
we do not provide the SnapPy coordinates, instead repeating
the standard coordinates. These are the knots for which
SnapPy does not identify $S^3$ surgery with $\frac10$
and instead uses some other rational number, 
typically $\frac01$ or $\pm \frac11$, as the trivial slope.

This leaves 336 knots, tabulated in 
\verb"Remaining336.csv", 
which have triangulations of eight or nine tetrahedra 
and non-trivial, non-toroidal exceptional slopes. 
For many of these, we were not able to verify one or both
of our conjectures. For this reason, we have added 
three columns to the data, indicating whether we have
verified Conjecture~\ref{Conj1}, Conjecture~\ref{Conj6}, 
or at least one of the two conjectures. 

Some of these knots have only one exceptional surgery meaning we 
cannot determine the linear transformation from
SnapPy coordinates to standard coordinates. Even in cases where
we found SnapPy coordinates for boundary slopes of these
knots, we cannot report the corresponding standard 
coordinates for those boundary slopes.

Note that taking the mirror reflection
of a knot changes the sign of all exceptional and boundary slopes. 
We do not distinguish between a knot and 
its reflection in our tables of surgery slopes.

\end{document}